\newtheorem{prop}{Proposition}[section]
\newtheorem{thm}[prop]{Theorem}
\newtheorem{cor}[prop]{Corollary}
\newtheorem{lem}[prop]{Lemma}
\theoremstyle{definition}
\newtheorem*{remark}{Remark}
\theoremstyle{remark}
\newcommand{\vareps}{\varepsilon}
\newcommand{\N}{\mathbb{N}}
\newcommand{\R}{\mathbb{R}}
\newcommand{\la}{\langle}
\newcommand{\ra}{\rangle}
\newcommand{\inv}{^{-1}}
\newcommand{\TLF}{topologically locally finite}
\newcommand{\CAT}[1]{\mathsf{CAT}(#1)}
\def\bs#1.{
              \def\temp{#1}
              \ifx\temp\empty
                   \mathcal{B}
              \else
                   \mathcal{B}(#1)
              \fi
}
\DeclareMathOperator{\proj}{proj} 
\DeclareMathOperator{\Stab}{Stab}  
 \DeclareMathOperator{\Ker}{Ker}
\DeclareMathOperator{\Isom}{Isom}
\DeclareMathOperator{\GeomDim}{GeomDim}
\DeclareMathOperator{\LF}{Rad_{\mathscr{L\!F\!}}}
\DeclareMathOperator{\bd}{\partial_\infty}\DeclareMathOperator{\bdfine}{\partial_\infty^{\mathrm{fine}}\!}
\DeclareMathOperator{\bdcomb}{\partial_\infty^{\mathrm{comb}}\!}
\long\def\symbolfootnote[#1]#2{\begingroup%
\def\thefootnote{\fnsymbol{footnote}}\footnote[#1]{#2}\endgroup}
\begin{document}
\title%
{Amenable groups and Hadamard spaces with a totally disconnected isometry group}
\author{Pierre-Emmanuel Caprace\thanks{F.N.R.S. research fellow, partially supported by the Wiener-Anspach Foundation.}}
\date{1st May 2007}

\maketitle

\begin{abstract}
Let $X$ be a locally compact Hadamard space and $G$ be a totally disconnected group acting continuously,
properly and cocompactly on $X$. We show that a closed subgroup of $G$ is amenable if and only if it is
(topologically locally finite)-by-(virtually abelian). We are led to consider a set $\bdfine X$ which is a
refinement of the visual boundary $\bd X$. For each $x \in \bdfine X$, the stabilizer $G_x$ is amenable.
\end{abstract}

\newcommand{\subjclass}[1]{\symbolfootnote[0]{\noindent AMS classification numbers (2000):~#1.}}
\newcommand{\keywords}[1]{\symbolfootnote[0]{\emph{Keywords}:~#1}}

\subjclass{20F65; 20E42, 20F50, 43A07} 
\keywords{amenable group, $\CAT0$ space, totally disconnected group, locally finite group.}
\section{Introduction}

The class of amenable locally compact groups enjoys remarkable closure properties with respect to algebraic
operations, such as taking quotients or closed subgroups, or forming group extensions. However, despite of this
nice algebraic behaviour, the interaction between the amenability of a given group and the algebraic structure
of that group is still not completely understood. This is notably illustrated by the still unresolved problem to
show whether or not there exists an infinite finitely generated simple group which is amenable. On the other
hand, for some special classes of locally compact groups, the notion of amenability has a very well understood
algebraic interpretation. For example, it is known that a connected locally compact group is amenable if and
only if its solvable radical is cocompact \cite[Th.~3.8]{Paterson_amenability}. Therefore, understanding the
structure of amenable locally compact groups amounts to understand the structure of amenable locally compact
groups which are totally disconnected. The purpose of this paper is to show that in some cases, the relevant
algebraic property for amenable totally disconnected groups is the notion of {topological local finiteness}. A
subgroup $H$ of a topological group $G$ is called \textbf{topologically locally finite} if every finite subset
of $H$ topologically generates a compact subgroup of $G$. Basic facts on \TLF{} groups may be found in
Sect.~\ref{sect:topoLF} below. Here we merely mention a result of V.~Platonov \cite{Platonov65} (see
Proposition~\ref{prop:Platonov} below) which ensures that the class of \TLF{} groups is closed under group
extensions. In particular, any topological group $G$ possesses a \textbf{\TLF{} radical}, or
\textbf{LF-radical}, namely a maximal normal subgroup $N$ which is \TLF{} and such that $G/N$ has no nontrivial
normal \TLF{} subgroup. The LF-radical of $G$ is denoted by $\LF(G)$.

\medskip In this paper we focus on isometry groups of locally compact Hadamard spaces. Recall that a Hadamard
space is a complete $\CAT0$ space. Given a locally compact Hadamard space $X$, its isometry group $\Isom(X)$,
endowed with the topology of uniform convergence on compact subsets, is a locally compact second countable
group. The result of this paper is the following:
\begin{thm}\label{thm:StructureAmenable}
Let $X$ be a locally compact Hadamard space and $G$ be a totally disconnected group acting continuous, properly
and cocompactly on $X$. Then a closed subgroup $H < G$  is amenable if and only if $\LF(H)$ is open in $H$ and
the quotient   $H/\LF(H)$ is virtually abelian.
\end{thm}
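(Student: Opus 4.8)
The plan is to prove the two implications separately, with essentially all the difficulty concentrated in the forward direction. For the converse, suppose $\LF(H)$ is open in $H$ and $H/\LF(H)$ is virtually abelian. Since $\LF(H)$ is topologically locally finite, it is the directed union of the (relatively) compact subgroups generated by its finite subsets, and amenability passes to such increasing unions of compact subgroups; hence $\LF(H)$ is amenable. The quotient $H/\LF(H)$ is virtually abelian, so amenable as well, and amenability is stable under extensions, whence $H$ is amenable. For this direction the openness of $\LF(H)$ is not even needed.

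For the forward implication, let $H<G$ be closed and amenable. First I would record the standing structural facts: by van Dantzig's theorem the totally disconnected group $G$, and hence $H$, has a neighbourhood basis of the identity consisting of compact open subgroups, and properness of the action makes every point stabilizer $H_x$ compact. The key geometric input I would invoke is the Adams--Ballmann alternative for amenable isometry groups of a proper Hadamard space: $H$ either fixes a point $\xi\in\bd X$ at infinity, or stabilizes a flat $F\subseteq X$, i.e.\ a closed convex subset isometric to some $\R^n$. Cocompactness of $G$ on $X$ guarantees that $X$ is proper with compact $\bd X$ and, more importantly, that the refinement $\bdfine X$ is well behaved.

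\textbf{Flat case.} Suppose $H$ stabilizes a flat $F\cong\R^n$. The pointwise stabilizer $K$ of $F$ lies in a point stabilizer, so it is compact, hence topologically locally finite, and will be absorbed into $\LF(H)$. The induced map $H\to\Isom(F)=\R^n\rtimes O(n)$ has image a totally disconnected closed subgroup $L$ acting properly on $\R^n$. Properness forces the orbit $L\cdot o$ of a basepoint to be discrete, so $\Stab_L(o)$ is compact \emph{and open} in $L$; similarly the translation subgroup $T=L\cap\R^n$ is a totally disconnected closed subgroup of $\R^n$, hence a discrete lattice $\cong\Z^m$. I would then show that the preimage in $H$ of the set of elements of $L$ with bounded orbits (equivalently, fixing a point of $F$) is precisely $\LF(H)$: it is compact-by-compact, so topologically locally finite, and it is open because $\Stab_L(o)$ is open in $L$. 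The quotient $H/\LF(H)$ is then a finite extension of $T\cong\Z^m$, hence virtually abelian.

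\textbf{Boundary case and induction.} The remaining, and hardest, case is when $H$ fixes a point $\xi\in\bd X$ but stabilizes no flat. Here the Busemann function at $\xi$ yields a continuous homomorphism $\beta\colon H\to\R$, whose image properness should force to be discrete, hence trivial or infinite cyclic. The kernel $H_0=\ker\beta$ preserves every horosphere centred at $\xi$, and the plan is to induct on dimension, replacing $X$ by a smaller Hadamard space on which $H_0$ acts properly (and cocompactly enough) --- either a minimal $H_0$-invariant closed convex subset or the transverse space extracted from the horospherical structure at $\xi$ --- and then to reassemble $\LF(H)$ and the virtually abelian quotient from the inductive output together with the cyclic factor coming from $\beta$. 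This is exactly where the refined boundary $\bdfine X$ of the abstract should enter: it is designed so that stabilizers of its points carry a transparent (topologically locally finite)-by-(virtually abelian) structure. I expect the real work to be twofold: first, upgrading the Adams--Ballmann dichotomy to show that amenable $H$ fixes a point of $\bdfine X$ (not merely of $\bd X$) or stabilizes a flat; and second, verifying that the inductive descent preserves a proper cocompact totally disconnected action on a locally compact space while the dimension strictly drops. The main obstacle is controlling this induction so that the extension assembling the pieces keeps $\LF(H)$ open in $H$ and the quotient virtually abelian at every stage.
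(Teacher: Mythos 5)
Your high-level skeleton (Adams--Ballmann dichotomy, a separate flat case, iteration through the refined boundary $\bdfine X$) is the same as the paper's, and your converse direction is correct. But the forward direction has a genuine gap, and it sits exactly where you yourself locate ``the main obstacle'': the descent from $X$ to $X_\xi$ does \emph{not} preserve properness (nor cocompactness) of the action. The canonical homomorphism $\varphi_\xi : \Isom(X)_\xi \to \Isom(X_\xi)$ is in general not proper (the paper states this explicitly), so on the iterated spaces $X_{\xi_1,\dots,\xi_i}$ none of your properness-based mechanisms are available: neither ``properness forces the Busemann image to be discrete'' nor your flat-case argument ``the image $L$ is closed in $\Isom(F)$ and acts properly on $F$''. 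The paper's technical core exists precisely to circumvent this failure: (a) the kernel of the Busemann homomorphisms restricted to a refined-boundary stabilizer is shown to be \TLF{} by lifting approximate fixed points in $X_{\xi_1,\dots,\xi_j}$ back to points of $X$ escaping along rays toward the $\xi_i$, folding these into a compact set using cocompactness of $G$ on the \emph{original} space $X$, and invoking compact open subgroups of the totally disconnected group $G$ (Lemmas~\ref{lem:CompactOpen} and~\ref{lem:smooth}, Proposition~\ref{prop:X_xi:smooth}); (b) the flat case (Lemma~\ref{lem:flats}) replaces properness by total disconnectedness: via Montgomery--Zippin the image $\varphi(H)$ is discrete only \emph{as a quotient group}, not as a subgroup of $\Isom(F)$, and one then needs the Tits alternative plus discreteness of translation lengths transferred down from $X$ (Corollary~\ref{cor:parabolic} and Lemma~\ref{lem:semisimple:inherited}); (c) termination of the iteration comes from bounded geometry (Corollary~\ref{cor:Bound_GeomDim}), not from a dimension count tied to proper actions. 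None of this is reconstructed or replaced in your proposal; you defer it as expected future work, which is to say the proof is not there.

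Moreover, two of the concrete claims you do make are false. First, the image $\beta(H) \leq \R$ of a Busemann homomorphism need not be discrete: in a product of two trees with $\xi$ chosen in an irrational direction, $\beta_\xi$ is an irrational linear combination of the integer-valued Busemann homomorphisms of the factors and has dense image on a large stabilizer. This is not peripheral: in the theorem $H/\LF(H)$ is genuinely only virtually abelian (a subgroup of some $\R^n$), not virtually cyclic or finitely generated, so any induction whose base step assembles a ``cyclic factor coming from $\beta$'' will fail. Second, in the flat case the set of elements of $L$ fixing a point of $F$ is not a subgroup (take the reflections in an infinite dihedral group acting on $\R$), so it cannot equal $\LF(H)$; the correct statement is that $\LF(H)$ is the preimage of the maximal finite normal subgroup of the discrete group $L$. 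That error is fixable, but note also that your flat case only treats flats inside $X$ itself, whereas the induction requires handling flats inside $X_{\xi_1,\dots,\xi_i}$, where properness fails and an argument like the paper's Lemma~\ref{lem:flats} is unavoidable.
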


\begin{cor}\label{cor:AmenableSimple}
Let $\Gamma$ be a finitely generated simple subgroup of $G$. If $\Gamma$ is contained in an amenable subgroup of
$G$, then it is finite.
\end{cor}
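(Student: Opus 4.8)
The plan is to apply Theorem~\ref{thm:StructureAmenable} to a closed amenable subgroup and then let the simplicity of $\Gamma$ force a dichotomy. First I would arrange for the ambient amenable subgroup to be closed: if $\Gamma \le A$ with $A$ amenable, then $\overline{A}$ is a closed subgroup of $G$ which is again amenable (the closure of an amenable subgroup of a locally compact group is amenable, since any continuous affine action of $\overline{A}$ on a compact convex set restricts to $A$, whose fixed point is $\overline{A}$-fixed by continuity). So I may assume $\Gamma \le H$ with $H < G$ closed and amenable. Theorem~\ref{thm:StructureAmenable} then gives that $N := \LF(H)$ is open in $H$ and that $H/N$ is virtually abelian. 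The key point is that $\Gamma \cap N$ is normal in $\Gamma$, because $N$ is normal in $H \ge \Gamma$; as $\Gamma$ is simple, either $\Gamma \le N$ or $\Gamma \cap N = \{1\}$.

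In the first case, $\Gamma$ is a finitely generated subgroup of the \TLF{} group $N = \LF(H)$. By definition of topological local finiteness, a finite generating set of $\Gamma$ topologically generates a compact subgroup $K \le G$, whence $\Gamma \le K$. Since $G$ is totally disconnected, $K$ is profinite, and so is the closure $P$ of $\Gamma$ inside $K$; moreover $\Gamma$ is dense in $P$. If $P$ is trivial then $\Gamma$ is trivial; otherwise $P$ admits a proper open normal subgroup $M$ (the open normal subgroups of a profinite group intersect in $\{1\}$). Then $\Gamma \cap M$ is normal in $\Gamma$, and it cannot equal $\Gamma$, for $\Gamma$ is dense in $P$ while $M$ is proper and closed; hence simplicity forces $\Gamma \cap M = \{1\}$. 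Consequently $\Gamma$ injects into the finite group $P/M$, so $\Gamma$ is finite.

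In the second case, the projection $H \to H/N$ restricts to an injection of $\Gamma$ into the virtually abelian group $H/N$. Since a subgroup of a virtually abelian group is again virtually abelian (intersect with the abelian finite-index subgroup), $\Gamma$ is virtually abelian. Choosing a finite-index abelian subgroup $A_0 \le \Gamma$, its normal core $C = \bigcap_{\gamma \in \Gamma} \gamma A_0 \gamma\inv$ is an abelian normal subgroup of finite index. By simplicity $C = \Gamma$ or $C = \{1\}$: if $C = \Gamma$ then $\Gamma$ is abelian and simple, hence cyclic of prime order; if $C = \{1\}$ then $\Gamma$ acts faithfully on the finite coset space $\Gamma/A_0$ and embeds into $\Sym(\Gamma/A_0)$. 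In either subcase $\Gamma$ is finite, which completes the argument.

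The only genuinely delicate step is the first case: one must turn the topological statement ``$\Gamma$ lies in a compact totally disconnected subgroup'' into the algebraic conclusion that a finitely generated simple group sitting densely in a profinite group is finite, which is exactly where the residual finiteness of profinite groups (through their proper open normal subgroups) enters. The remainder is a routine exploitation of the alternative imposed by simplicity, together with the structural input supplied by Theorem~\ref{thm:StructureAmenable}.
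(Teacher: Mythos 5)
Your proof is correct and follows essentially the same route as the paper: reduce to a closed amenable subgroup $H$ via the fixed point characterization of amenability, apply Theorem~\ref{thm:StructureAmenable} to get the LF-radical $N = \LF(H)$ with $H/N$ virtually abelian, and use simplicity to split into the case $\Gamma \le N$ (handled by compactness and profiniteness, i.e.\ residual finiteness) and the case $\Gamma \cap N = \{1\}$ (handled by embedding $\Gamma$ into the virtually abelian quotient). The only difference is that you spell out in full the two standard facts the paper invokes implicitly -- that a simple subgroup of a profinite group is finite, and that a simple virtually abelian group is finite -- which is harmless.
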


The following corollary is a generalization to amenable subgroups of the so called Solvable Subgroup Theorem for
$\CAT0$ groups \cite[Ch.~II, Th.~7.8]{BH99}:
\begin{cor}\label{cor:SolvSubgp}
Let $\Gamma$ be a group acting properly discontinuously and cocompactly on a complete $\CAT0$ space $X$. Then
any amenable subgroup of $\Gamma$ is virtually abelian and finitely generated.
\end{cor}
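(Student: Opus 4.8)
The plan is to deduce the statement from Theorem~\ref{thm:StructureAmenable} by specialising to a discrete group and then exploiting the geometry of the properly discontinuous action to sharpen the conclusion. First I would verify the hypotheses of Theorem~\ref{thm:StructureAmenable}. Fix a basepoint $x_0$ and a compact set $K \subseteq \overline{B}(x_0,R)$ with $\Gamma K = X$. Since the action is properly discontinuous, the orbit map is proper, so $\{\gamma \in \Gamma : d(\gamma x_0,x_0) \le D\}$ is finite for every $D$. If $y \in \overline{B}(x_0,r)$, then $y \in \gamma K$ for some $\gamma$, whence $d(\gamma x_0,x_0) \le r+R$; only finitely many $\gamma$ qualify, so $\overline{B}(x_0,r)$ lies in a finite union of translates of $K$ and is therefore compact. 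Thus $X$ is a locally compact Hadamard space. Taking $G = \Gamma$ with its discrete topology, $G$ is totally disconnected and acts continuously, properly and cocompactly on $X$, and every subgroup of $G$ is closed. Applying Theorem~\ref{thm:StructureAmenable} to an amenable subgroup $H \le \Gamma$ then yields that $\LF(H)$ is open in $H$ (automatic here) and that $H/\LF(H)$ is virtually abelian, $\LF(H)$ being the ordinary locally finite radical of $H$.

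The next step is to show that $\LF(H)$ is finite, i.e. that the topological local finiteness supplied by the theorem collapses to genuine finiteness. I would first bound the orders of finite subgroups of $\Gamma$: any finite $F \le \Gamma$ has a bounded orbit, hence fixes a point $p$ of the complete $\CAT0$ space $X$ (the circumcentre of an orbit); choosing $\gamma$ with $\gamma p \in K \subseteq \overline{B}(x_0,R)$, every $f \in \gamma F \gamma\inv$ satisfies $d(f x_0,x_0) \le 2 d(x_0,\gamma p) \le 2R$, so $\gamma F \gamma\inv$ is contained in the fixed finite set $\{\gamma : d(\gamma x_0,x_0) \le 2R\}$. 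This gives a uniform bound $M$ on the order of a finite subgroup. A locally finite group all of whose finite subgroups have order at most $M$ is itself finite (a finite subgroup of maximal order absorbs every other element), so $\LF(H)$ is finite. Consequently $H$ is finite-by-(virtually abelian); passing to a finite-index subgroup $H_1 \le H$ containing $\LF(H)$ with $H_1/\LF(H)$ abelian, we obtain $[H_1,H_1] \le \LF(H)$ finite.

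It remains to upgrade ``finite-by-abelian'' to ``finitely generated and virtually abelian'', which is where proper discontinuity is used in full. Every element of $\Gamma$ is a semisimple isometry \cite[II.6.10]{BH99}, and $X$, being proper and cocompact, has finite geometric dimension. I would apply the Flat Torus Theorem \cite[II.7.1]{BH99} to the abelian part: proper discontinuity forces it to act as a \emph{discrete} group of translations on a flat of dimension at most $\dim X$, so it is finitely generated free abelian modulo a finite kernel of elliptic elements (finite by properness). Hence $H_1$, and therefore $H$, is finitely generated. Finally, a finitely generated group with finite commutator subgroup is virtually abelian: the centraliser $C$ of the finite group $[H_1,H_1]$ has finite index, satisfies $[C,C] \le [H_1,H_1] \le Z(C)$ so is nilpotent of class $\le 2$, and a finitely generated nilpotent group with finite commutator subgroup is virtually abelian (a finite-index torsion-free subgroup is then forced to be abelian).

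The main obstacle I anticipate is precisely this last step. Theorem~\ref{thm:StructureAmenable} only asserts that $H/\LF(H)$ is virtually abelian, with no control on finite generation, whereas a virtually abelian group need not be finitely generated; examples such as $\mathbb{Z}[1/2]$ show that bounding the rank of finitely generated subgroups alone is insufficient. Extracting both finiteness of rank and discreteness of the abelian part from the properly discontinuous action is the delicate point, and amounts to re-running the flat-torus analysis underlying the classical Solvable Subgroup Theorem \cite[II.7.8]{BH99}. By contrast, the bounded-torsion argument giving finiteness of $\LF(H)$, and the reduction to Theorem~\ref{thm:StructureAmenable}, are comparatively routine.
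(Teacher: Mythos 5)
Your reduction to Theorem~\ref{thm:StructureAmenable} and your argument that $\LF(H)$ is finite are both sound, and they run parallel to the paper's own proof: the paper obtains finiteness of the LF-radical by invoking the fact that a group acting geometrically on a complete $\CAT0$ space has only finitely many conjugacy classes of finite subgroups \cite[Ch.~II, Cor.~2.8]{BH99}, which is exactly the content of your bounded-torsion argument. Up to the point where you produce a finite-index subgroup $H_1 \le H$ with $[H_1,H_1] \le \LF(H)$ finite, the two proofs agree.

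The genuine gap is the step you yourself flag as the main obstacle: proving that $H_1$ is finitely generated. Your proposed mechanism --- applying the Flat Torus Theorem to ``the abelian part'' --- is circular, because the Flat Torus Theorem \cite[Ch.~II, Th.~7.1]{BH99} applies to a free abelian group of \emph{finite rank} acting properly by semisimple isometries, and the abelian group $H_1/\LF(H)$ is precisely what you do not yet know to be finitely generated (as your own $\mathbb{Z}[1/2]$ example shows, virtual abelianness gives no such control). Announcing that the delicate point ``amounts to re-running the flat-torus analysis'' does not close it. The paper closes it with a different and much softer ingredient, absent from your proposal: the \emph{ascending chain condition for virtually abelian subgroups} of a group acting geometrically on a complete $\CAT0$ space \cite[Ch.~II, Th.~7.5]{BH99}. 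Concretely, every finitely generated subgroup of $H_1$ has derived subgroup contained in the finite group $\LF(H)$, hence is virtually abelian by \cite[Ch.~II, Lem.~7.9]{BH99}; the finitely generated subgroups of $H_1$ thus form a family of virtually abelian subgroups of $\Gamma$, and by the ascending chain condition this family has a maximal element $A$. If $A \neq H_1$, adjoining to $A$ any element of $H_1 \setminus A$ would yield a strictly larger finitely generated (hence virtually abelian) subgroup, a contradiction; so $H_1 = A$ is finitely generated and virtually abelian, and therefore so is $H$. Replacing your flat-torus paragraph by this ACC argument completes your proof; no flat-torus analysis is needed at all.
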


We refer to the introduction and reference list of \cite{AB98} for a historical background on amenability in the
geometrical context of non-positive curvature. The proof of Theorem~\ref{thm:StructureAmenable} is based on the
one hand, on obstructions for amenable groups to act by isometries on Hadamard spaces established by S.~Adams
and W.~Ballmann \cite{AB98} (see Proposition~\ref{prop:AdamsBallmann} below) and, on the other hand, on an
elementary  construction which associates to every point $\xi$ of the visual boundary of any $\CAT0$ space $X$
another $\CAT0$ space $X_\xi$. This construction is described in Sect.~\ref{sect:X_xi} below; it was considered
before by F.~Karpelevi\v{c} \cite{Karpelivic} in the context of symmetric spaces, (implicitly) by F.~Bruhat and
J.~Tits \cite[Prop.~7.6.4]{BT72} in the context of Euclidean buildings, and then by B.~Leeb \cite[\S2.1.3]{Leeb}
in the context of general Hadamard spaces. As suggested in \cite{Leeb}, one may iterate this construction to
define a boundary $\bdfine X$ of a proper $\CAT0$ space $X$ of bounded geometry which refines the usual visual
boundary $\bd X$, in the sense that there is a canonical $\Isom(X)$--equivariant surjection $\bdfine X \to \bd
X$. Therefore, the set $\bdfine X$ is called the \textbf{refined visual boundary} of $X$. A generic point of
$\bdfine X$ is a sequence of the form $(\xi_1, \xi_2, \dots, \xi_k, x)$ such that $\xi_1 \in \bd X$, $\xi_{i+1}
\in \bd X_{\xi_1, \dots, \xi_i}$ for each $i=1, \dots, k-1$ and $x \in X_{\xi_1, \dots, \xi_k}$. When $X$ is of
bounded geometry, the maximal possible length of this sequence happens to be bounded above by a constant
depending only on $X$, see Corollary~\ref{cor:Bound_GeomDim} below. The following result provides a more
geometric description of amenable subgroups of $G$; modulo the main result of \cite{AB98}, it is essentially
equivalent to Theorem~\ref{thm:StructureAmenable}:


\begin{thm}\label{thm:RefinedBdry}
Let $X$ be a locally compact Hadamard space and $G$ be a totally disconnected group acting continuously,
properly and cocompactly on $X$. Any amenable subgroup of $G$ has a finite index subgroup which fixes a point in
$X \cup \bdfine X$. Conversely, given any point $x \in X \cup \bdfine X$, the stabilizer $G_x$ is amenable.
\end{thm}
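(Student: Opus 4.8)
The plan is to prove both directions by induction on the geometric dimension $\GeomDim X$, which is finite by Corollary~\ref{cor:Bound_GeomDim} and, crucially, drops strictly when one passes from $X$ to a space $X_\xi$ attached to a boundary point $\xi \in \bd X$; this is exactly the finiteness that makes the sequences $(\xi_1, \dots, \xi_k, x)$ parametrizing $\bdfine X$ terminate. Throughout I would use the facts established in Section~\ref{sect:X_xi}: for $\xi \in \bd X$ the stabilizer $G_\xi$ acts continuously, properly and cocompactly by isometries on the locally compact Hadamard space $X_\xi$, the acting group is again totally disconnected, and $\GeomDim X_\xi < \GeomDim X$. The base case $\GeomDim X = 0$ is immediate, since then $X$ is a point and $G$ is compact.

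\emph{The converse} is the easier half. If $x \in X$, then $G_x = \Stab_G(x)$ is compact by properness of the action, hence amenable. If $x = (\xi_1, \dots, \xi_k, y) \in \bdfine X$ with $k \ge 1$, I would peel off the first coordinate: since $G_x$ is precisely the stabilizer in $G_{\xi_1}$ of the point $(\xi_2, \dots, \xi_k, y) \in X_{\xi_1} \cup \bdfine X_{\xi_1}$, and $G_{\xi_1}$ acts on $X_{\xi_1}$ with all the required hypotheses and strictly smaller geometric dimension, the induction hypothesis applied to $X_{\xi_1}$ gives amenability of $G_x$.

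\emph{The forward direction} is where the work lies. Let $H$ be amenable. Replacing it by its closure $\bar H$ in $G$ (again amenable, now closed and hence locally compact and totally disconnected) and noting that a finite-index subgroup of $\bar H$ fixing a point meets $H$ in a finite-index subgroup fixing the same point, I may assume $H$ is closed. The engine is Proposition~\ref{prop:AdamsBallmann} (Adams--Ballmann): $H$ either fixes a point $\xi \in \bd X$, or stabilizes a flat $F \cong \R^n \subseteq X$. In the first case $H \le G_\xi$, and the induction hypothesis applied to the $H$-action on $X_\xi$ produces a finite-index subgroup of $H$ fixing some $y \in X_\xi \cup \bdfine X_\xi$; then $(\xi, y) \in \bdfine X$ is the desired fixed point.

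It remains to treat the flat case, which I expect to be the main obstacle. Here $H$ acts properly by isometries on $F \cong \R^n$, giving a homomorphism $\phi$ into $\Isom(\R^n) = \R^n \rtimes O(n)$ whose kernel (the pointwise stabilizer of $F$) is compact and, by properness of a faithful action, whose image $L$ is closed. The crucial point is that the translation subgroup $T = L \cap \R^n$ is \emph{discrete}: the preimage $\phi^{-1}(T)$ is a totally disconnected locally compact group that is (compact)-by-$T$, so by van Dantzig it contains a compact open subgroup whose image is a compact open subgroup of the closed subgroup $T \cong \R^a \times \Z^b$; since $\R^a$ has no compact open subgroup for $a \ge 1$, this forces $a = 0$ and $T \cong \Z^b$. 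If $b = 0$ then $L \le O(n)$ fixes the origin and $H$ fixes a point of $F \subseteq X$. If $b \ge 1$, then $L$ normalizes the lattice $T$, so the rotational parts of $L$ act on $V_0 = \R\text{-span}(T)$ through the \emph{finite} group of metric-preserving automorphisms of $T$; passing to the finite-index subgroup $H_1 \le H$ acting trivially there, every direction $w \in V_0 \setminus \{0\}$ yields a boundary point $\xi_w \in \bd F \subseteq \bd X$ fixed by $H_1$ (translations act trivially on $\bd F$), and the induction hypothesis applied to $H_1 \le G_{\xi_w}$ acting on $X_{\xi_w}$ finishes the proof. The delicate points to verify carefully are the closedness of $L$ and the discreteness of $T$, that is, the interplay between total disconnectedness of $H$, properness of the action, and the connected translation group of $\R^n$.
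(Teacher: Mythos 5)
Your induction scheme rests on a claim that is false, and that the paper explicitly warns against: that for $\xi \in \bd X$ the stabilizer $G_\xi$ acts \emph{properly} (and cocompactly) on $X_\xi$ as a totally disconnected group. Proposition~\ref{prop:X_xi}(ii) is immediately followed by the remark that $\varphi_\xi$ need not be proper, and this is not a removable technicality. Take $X$ a regular locally finite tree and $G = \Aut(X)$: for an end $\xi$, any two rays pointing to $\xi$ eventually coincide, so $X_\xi$ is a \emph{single point}, while $G_\xi$ is the noncompact horospherical end stabilizer; its action on $X_\xi$ is cocompact but wildly non-proper. Consequently the kernel $K_\xi$ of $\varphi_\xi$, and more generally the stabilizers $G_{\xi,x}$ with $x \in X_\xi$, are typically noncompact, and both halves of your induction break. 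In the converse direction, the stabilizer in $G_{\xi_1}$ of a point of $X_{\xi_1} \cup \bdfine X_{\xi_1}$ cannot be handled by the inductive hypothesis, since the hypotheses of the theorem fail for $(G_{\xi_1}, X_{\xi_1})$; at the very least one needs amenability of $K_{\xi_1}$, which is precisely Proposition~\ref{prop:X_xi:smooth}(ii) and is a genuine theorem, not a formality. In the forward direction the same problem blocks the step after Adams--Ballmann. This is exactly why the paper never runs an induction on the pair $(G_\xi, X_\xi)$: instead, every statement about stabilizers of refined boundary points (Proposition~\ref{prop:X_xi:smooth}) and about flats in $X_{\xi_1,\dots,\xi_n}$ (Lemma~\ref{lem:flats}) is pulled back to the \emph{original} proper cocompact action of $G$ on $X$, via the Busemann homomorphisms and the compactness argument of Lemma~\ref{lem:smooth}, producing the (\TLF{})-by-abelian structure of stabilizers. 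That machinery is the actual content of the theorem, and your proposal assumes it away.

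Two further problems. First, your flat case contains a wrong inference: $T = L \cap \R^n = \{1\}$ does not imply that $L$ is contained in $O(n)$ or fixes a point of $F$. A screw motion $g = (e_3, R_\theta) \in \Isom(\R^3)$ with $\theta/2\pi$ irrational generates a closed, discrete (hence totally disconnected) subgroup acting properly, with trivial intersection with the translation group and with no fixed point in $\R^3$; hyperbolic isometries of $\R^n$ need not be translations. The correct conclusion in that case, and the one the paper extracts in Lemma~\ref{lem:flats}, is a fixed point in $\bd F$ (the attractive fixed point of a hyperbolic element, fixed by the abelian image or by a centralizing finite-index subgroup). Second, your flat argument uses properness of the $H$--action on $F$ (compact kernel, closed image, the van Dantzig step), which is available only for flats inside $X$ itself; the flats arising at deeper levels of the recursion live in $X_{\xi_1,\dots,\xi_n}$, where properness is again unavailable, and there the paper must instead invoke Montgomery--Zippin to get discreteness of $\varphi(H)$, the Tits alternative, and discreteness of translation lengths (Corollary~\ref{cor:parabolic} together with Lemma~\ref{lem:semisimple:inherited}) --- none of which your proposal supplies a substitute for. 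Finally, a smaller point: Corollary~\ref{cor:Bound_GeomDim} does not assert finiteness or strict decrease of $\GeomDim$; it bounds the length of iterated sequences using bounded geometry and sparse sets, while the statement about $\GeomDim(X_\xi) < \GeomDim(X)$ appears only in a remark, resting on results of Kleiner that the paper does not prove.
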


It is likely that if $\Isom(X)$ is cocompact, then the full stabilizer $\Isom(X)_x$ of any point $x \in X \cup
\bdfine X$ is always amenable. In fact, this is already known if $X$ is $\CAT{-1}$ by a result of Sh.~Mozes and
M.~Burger \cite[Prop.~1.6]{BM96}.

We remark that for some specific examples of Hadamard spaces $X$, the hypothesis that $G$ is cocompact may be
relaxed. This is for example the case when $X$ is a (cellular) building of arbitrary type or a $\CAT0$ cube
complex. In that case, the hypothesis that $G$ is cocompact may be replaced by: $X$ is proper of bounded
geometry and $G$ acts properly by cellular isometries. All results stated above remain true in this context. An
important point in this case is the existence of a notion of combinatorial convexity, which is a handful
supplement of the notion of geodesic convexity. In fact, the specific combinatorial structure of $X$ is
inherited by the asymptotic space $X_\xi$: if $X$ is a building (resp. a cube complex), then so is $X_\xi$ for
each $\xi \in \bd X$. In fact, this may be used to define a boundary space in a purely combinatorial way which
is a quotient of $\bdfine X$. This boundary is called the \textbf{combinatorial boundary} and denoted $\bdcomb
X$. It remains true that for each $x \in \bdcomb X$, the stabilizer $G_x$ is amenable.

\subsection*{Acknowledgements}

A substantial part of this work was accomplished when I was visiting F.~Haglund at the Université de Paris XI in
Orsay; I would like to thank him for stimulating discussions. The results presented here were first discovered
in the special case when $X$ is a building. Their extensions to a higher level of generality benefited from
exchanges with N.~Monod, to whom I express my special gratitude.

\section{On topologically locally finite groups}\label{sect:topoLF}

Let $G$ be a topological group. A subgroup $H < G$ is called \textbf{\TLF{}} if the closure of every finitely
generated subgroup of $H$ is compact. It is readily seen that if $G$ itself is \TLF, then so is any subgroup and
any continuous quotient group. Moreover, we have:

\begin{lem}\label{lem:TLF_closure}
Let $G$ be a locally compact group and $H$ be a \TLF{} subgroup. Then the closure $\overline H$ is \TLF{}, and
$\overline H$ endowed with the induced topology is a \TLF{} group.
\end{lem}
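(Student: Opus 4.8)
The plan is to prove the two assertions in sequence, the second being a quick consequence of the first. Since $\overline H$ is closed in $G$, it is itself locally compact, and for any subgroup $F < \overline H$ the closure of $F$ computed in $\overline H$ coincides with its closure in $G$; moreover compactness in the subspace topology agrees with compactness in $G$. Hence once I know that $\overline{\langle g_1, \dots, g_n \rangle}$ is compact in $G$ for every finite family $g_1, \dots, g_n \in \overline H$, the statement that $\overline H$ is a \TLF{} group follows immediately. So the whole content is to establish this relative compactness. For the application $G$ is totally disconnected, so I would fix a compact open subgroup $V < G$ (van Dantzig; in general one reduces to this case via Gleason--Yamabe). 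The key preliminary observation is that $P := \overline H \cap V$ is then a \emph{compact} open subgroup of $\overline H$, and that a subgroup of $\overline H$ is relatively compact if and only if its orbit on the discrete coset space $\overline H / P$ is finite; equivalently, if and only if it normalizes some compact open subgroup. This reduces the lemma to producing, for a given finite family, a finite $\langle g_1, \dots, g_n \rangle$--invariant subset of $\overline H / P$.

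Next I would exploit the density of $H$ in $\overline H$. For each $i$ choose $h_i \in H$ with $h_i\inv g_i \in P$; this is possible because $g_i P$ is open and meets $H$, and the error $u_i := h_i\inv g_i$ automatically lies in $P = \overline H \cap V$ since it lies in $\overline H$. By the \TLF{} hypothesis on $H$, the subgroup $K := \overline{\langle h_1, \dots, h_n \rangle}$ is compact. Since $K$ is compact and $P$ is open, $K \cap P$ has finite index in $K$, so the family $\{ k P k\inv : k \in K \}$ is finite and $Q := \bigcap_{k \in K} k P k\inv$ is a compact open subgroup of $\overline H$ normalized by $K$. The finite $K$--orbit $Y := K \cdot eQ$ in $\overline H / Q$ is the natural candidate for the invariant set, and one checks directly that $g_i \cdot eQ = h_i \cdot eQ \in Y$ because $g_i \equiv h_i \pmod Q$ only up to the error $u_i$.

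The hard part is precisely to upgrade this to $g_i \cdot Y \subseteq Y$, i.e. to show that the whole subgroup $\langle g_1, \dots, g_n \rangle$ (not merely each $g_i$ acting on the base coset) preserves a finite set. Writing $g_i = h_i u_i$, invariance of $Y$ would be immediate if the error $u_i$ lay in the $K$--invariant subgroup $Q$ rather than merely in $P$: conjugating $u_i$ past an element of $K$ moves it out of $Q$, and since words in the $g_i$ have unbounded length these accumulated errors cannot be confined to any fixed finite set. Equivalently, the naive core $\bigcap_{w \in \langle g_i \rangle} w P w\inv$ need not be open, which reflects the fact that the existence of a $\langle g_i \rangle$--invariant compact open subgroup is essentially equivalent to the relative compactness one is trying to prove. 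The way I would break this is by a second layer of approximation: the errors $u_i$ themselves lie in $\overline H$, hence can again be approximated by elements $v_i \in H$, and then $h_1, \dots, h_n, v_1, \dots, v_n$ generate a single finitely generated subgroup of $H$ whose closure $L$ is compact by the \TLF{} hypothesis. One then builds $Q$ to be $L$--invariant and expresses each $g_i$ modulo $Q$ in terms of $L$; the compactness of $L$ forces only finitely many coset representatives to intervene, which is what makes the construction terminate and yields the desired finite $\langle g_i \rangle$--invariant subset of $\overline H / Q$. Verifying that this truncated core genuinely closes up under all of $\langle g_1, \dots, g_n \rangle$ is the technical heart of the argument; granting it, $\overline{\langle g_1, \dots, g_n \rangle}$ is relatively compact, and the reductions of the first paragraph then deliver both the relative compactness of finitely generated subgroups of $\overline H$ and the conclusion that $\overline H$ is a \TLF{} group.
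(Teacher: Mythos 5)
Your reduction in the first paragraph is sound, but the argument after it has two genuine gaps. First, the lemma is stated for an \emph{arbitrary} locally compact group $G$ (and the paper needs this generality, e.g.\ to conclude that the LF-radical of any locally compact group is closed), whereas your construction only works when $G$ has a compact open subgroup; the parenthetical appeal to Gleason--Yamabe is not a proof, and it is not clear how such a reduction would go. (As a side remark, your claimed equivalence ``relatively compact iff it normalizes some compact open subgroup'' is also false in the direction you would need: $\Z \times \{0\}$ normalizes $\{0\} \times \Z_p$ in $\Z \times \Z_p$ without being relatively compact; only the finite-orbit criterion is usable.) Second, and more seriously, the core of the proof is missing by your own admission: you need the whole group $\la g_1, \dots, g_n \ra$, not merely each generator, to preserve a finite subset of $\overline H / Q$; you correctly diagnose why the naive attempt fails (the errors $u_i \in P$ do not lie in $Q$, and conjugating them along words of unbounded length lets them escape any fixed compact set), and then you ``grant'' exactly this point. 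The proposed second layer of approximation does not close the circle: writing $u_i = v_i w_i$ with $v_i \in H$ produces new errors $w_i$ lying in an open subgroup built from $L = \overline{\la h_i, v_i \ra}$, but controlling words in the $g_i$ again requires conjugating these new errors by elements that need not normalize that subgroup, so the same obstruction recurs at every layer and nothing forces the process to terminate.

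The idea you are missing -- and which the paper's proof uses -- is to approximate not the generators but an entire compact neighbourhood. Take a symmetric, relatively compact, open neighbourhood $U$ of $1$ containing $g_1, \dots, g_n$ (this exists in any locally compact group, with no total disconnectedness needed). Since $H$ is dense and $U$ is open, every $y \in \overline{U^2}$ satisfies $y \in hU$ for some $h \in H \cap U^2$, so by compactness $\overline{U^2} \subset h_1 U \cup \dots \cup h_m U$ with $h_j \in H$. By the \TLF{} hypothesis on $H$, these finitely many $h_j$ lie in a single compact subgroup $K$, whence $U^2 \subset KU$. Because $K$ is a \emph{group}, this absorption propagates to words of arbitrary length: $U^3 \subset KU \cdot U \subset K \cdot KU = KU$, and inductively $U^n \subset KU$ for all $n$. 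Hence $\la U \ra \subset K \overline{U}$ is contained in a compact set, and being an open (hence closed) subgroup, it is compact; since it contains the $g_i$, this finishes the proof in full generality. This single covering-and-absorption step is precisely what defeats the unbounded-word-length problem that your accumulated error terms could not overcome.
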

\begin{proof}
Suppose that $G$ contains a dense subgroup $H$ which is \TLF. We must show that $G$ itself is \TLF. Let $C$ be a
relatively compact open neighborhood of the identity in $G$. Given $c_1, \dots, c_k \in G$, the subset $C_1 =
\bigcup_{i=0}^k c_i.C$, where $c_0 = 1$, is a relatively compact open neighborhood of the identity containing
$c_1, \dots, c_k$. We set $U = C_1 \cup C_1\inv$. It suffices to prove that the subgroup of $G$ generated by $U$
is compact. Note that this subgroup is open, hence closed.

Let now $y \in \overline{U^2}$. Then $y.U \cap U^2$ is a nonempty open set. Hence there exists $h \in H \cap y.U
\cap U^2$. Since $U = U\inv$ and $h \in y.U$, we have $y \in h.U$. Therefore, we deduce that $\overline{U^2}
\subset \bigcup_{h \in H \cap U^2} h.U$. Since $\overline{ U^2}$ is compact, there exist $h_1, \dots, h_n \in H$
such that $\overline{U^2} \subset \bigcup_{i=1}^n  h_i.U$. Let $K$ be a compact subgroup of $G$ containing
$\{h_1, \dots, h_n\}$. Then we have:
$$U^3 = U^2 \cdot  U \subset (K \cdot U) \cdot U = K \cdot U^2 \subset K \cdot K \cdot U = K \cdot U.$$
We obtain inductively that $U^n$ is contained in $K \cdot U$ for each $n$. Since $\la U \ra = \bigcup_{n >0}
U^n$ and since $K \cdot \overline U$ is compact, it follows that $\la U \ra$ is compact, as desired.
\end{proof}

By Zorn's lemma, any topological group $G$ possesses a maximal normal subgroup which is topologically locally
finite. It is called the \textbf{LF-radical} of $G$ and denoted $\LF(G)$. By Lemma~\ref{lem:TLF_closure}, the
LF-radical of a locally compact group is a closed subgroup. The following result was proven by V. Platonov
\cite[Th.~2]{Platonov65}:

\begin{prop}\label{prop:Platonov}
Let $G$ be a locally compact group and $N$ be a closed normal subgroup. If $N$ and $G/N$ are both \TLF{}, then
so is $G$.
\end{prop}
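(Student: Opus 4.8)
The plan is to prove the assertion in its equivalent form: every finite subset $F \subseteq G$ generates a relatively compact subgroup. Write $\pi \colon G \to G/N$ for the quotient map and set $\bar L = \overline{\la \pi(F)\ra}$; since $G/N$ is \TLF{}, this is a compact subgroup of $G/N$. My first step is to lift $\bar L$ to a compact set upstairs. Because $\pi$ is open and $G$ is locally compact, every compact subset of $G/N$ is the image of a compact subset of $G$, so I can choose a compact symmetric $C \subseteq G$ with $1 \in C$, $F \subseteq C$ and $\pi(C) = \bar L$: cover $\bar L$ by finitely many translates of $\pi(W)$ for a compact identity neighbourhood $W \subseteq G$, lift the translating elements, intersect the resulting compact set with the closed set $\pi\inv(\bar L)$, and finally symmetrise and adjoin $F \cup F\inv \cup \{1\}$.

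The key algebraic observation is that multiplication within $C$ produces only a controlled error in $N$. Since $\pi(C)=\bar L$ is a subgroup, $\pi\inv(\bar L) = CN$, and hence $C^2 \subseteq CN$; writing $c_1 c_2 = c_3 n$ with $c_3 \in C$ gives $n = c_3\inv c_1 c_2 \in C^3 \cap N$. Thus, setting $E = C^3 \cap N$ --- a compact subset of $N$, as $C^3$ is compact and $N$ is closed --- one has $C^2 \subseteq C \cdot E$. An immediate induction yields $C^n \subseteq C \cdot E^{n-1}$ for all $n \geq 1$, so that $\la C \ra = \bigcup_{n} C^n \subseteq C \cdot \la E \ra$ (note $E$ is symmetric and contains $1$). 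Consequently $\overline{\la F \ra} \subseteq \overline{\la C \ra} \subseteq C \cdot \overline{\la E \ra}$, and the whole problem is reduced to showing that $\overline{\la E \ra}$ is compact; that is, that a compact subset of the \TLF{} group $N$ generates a relatively compact subgroup.

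This last reduction is where the real work lies, and I expect it to be the main obstacle: the definition of \TLF{} controls only \emph{finitely} generated subgroups, so upgrading it to \emph{compactly} generated subgroups requires an extra covering argument of the same flavour as the proof of Lemma~\ref{lem:TLF_closure}. Working inside the locally compact group $N$, I would choose a relatively compact open symmetric identity neighbourhood $U \supseteq E \cup \{1\}$, reducing matters to proving that $\la U \ra$ is relatively compact. For each $y \in \overline{U^2}$ the open set $y U \cap U^2$ is nonempty, so $y \in h U$ for some $h \in U^2$; by compactness of $\overline{U^2}$ finitely many such $h_1, \dots, h_n \in U^2 \subseteq N$ suffice, giving $\overline{U^2} \subseteq \bigcup_{i=1}^n h_i U$. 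Since $N$ is \TLF{}, the finite set $\{h_1, \dots, h_n\}$ generates a compact subgroup $K = \overline{\la h_1, \dots, h_n\ra}$, whence $U^2 \subseteq KU$ and then, by the same induction as in Lemma~\ref{lem:TLF_closure} (namely $U^3 = U^2 U \subseteq K U^2 \subseteq K K U = KU$, and so on), $U^n \subseteq KU$ for every $n$. Therefore $\la U \ra \subseteq K \overline U$ is relatively compact, which settles the sub-claim.

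Combining the two halves then finishes the argument: $\overline{\la E\ra}$ is compact, so $C \cdot \overline{\la E\ra}$ is compact, and $\overline{\la F\ra} \subseteq \overline{\la C\ra} \subseteq C \cdot \overline{\la E\ra}$ is a closed subset of a compact set, hence compact. The only properties of $N$ used are that it is closed (so that $\pi\inv(\bar L)$ is closed and $E$ is compact) and normal (so that $G/N$ is a group), exactly as hypothesised.
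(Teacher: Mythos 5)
Your proof is correct. There is, however, no proof in the paper to compare it with: Proposition~\ref{prop:Platonov} is quoted from Platonov's 1965 article (in Russian), and the paper gives only the citation, so your argument is a genuinely self-contained alternative. It splits into two halves, and these relate to the paper as follows. The second half --- that a compact subset of a locally compact \TLF{} group generates a relatively compact subgroup --- is precisely Lemma~\ref{lem:TLF:compact} of the paper, proved there by the very covering argument you reproduce (itself borrowed from the proof of Lemma~\ref{lem:TLF_closure}); you could simply invoke that lemma, and no circularity arises since its proof does not rely on Proposition~\ref{prop:Platonov}. The genuinely new content is the first half: lifting the compact group $\bar L = \overline{\la \pi(F)\ra}$ to a compact symmetric set $C \subseteq G$ with $\pi(C) = \bar L$ and $F \subseteq C$, then observing that $\pi\inv(\bar L) = CN$ forces $C^2 \subseteq C \cdot E$ with $E = C^3 \cap N$ compact, symmetric and containing $1$, whence $\la C \ra \subseteq C \cdot \la E \ra$ by the left-multiplication induction $C^{n+1} = C \cdot C^n \subseteq C^2 E^{n-1} \subseteq C E^n$. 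All the details check out: $E$ is compact because $N$ is closed; the covering elements $h_1, \dots, h_n$ lie in $U^2 \subseteq N$, so the \TLF{} hypothesis on $N$ applies to them; and once $\overline{\la E \ra}$ is known to be compact, the set $C \cdot \overline{\la E \ra}$ is compact, hence closed, and therefore contains $\overline{\la F \ra}$, which is then a compact group. What your route buys is a short, complete proof of Platonov's theorem using only the tools of Section~\ref{sect:topoLF}, in place of a reference that is not readily accessible.
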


It follows from Proposition~\ref{prop:Platonov} that $\LF(G/\LF(G)) = \{1\}$ for any locally compact group $G$.
Another useful basic fact is the following:

\begin{lem}\label{lem:TLF:compact}
Let $G$ be a locally compact group. Then $G$ is \TLF{} if and only if every compact subset of $G$ is contained
in a compact subgroup.
\end{lem}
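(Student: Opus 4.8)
The plan is to prove the two implications separately, with the forward direction being the substantial one.

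For the easy implication, suppose every compact subset of $G$ lies in a compact subgroup. Given a finitely generated subgroup $\la g_1, \dots, g_k\ra < G$, the finite set $\{g_1, \dots, g_k\}$ is compact, hence contained in some compact subgroup $K$. Then $\la g_1, \dots, g_k\ra \subseteq K$, and since $K$ is closed the closure of this subgroup is also contained in $K$ and therefore compact. Thus $G$ is \TLF.

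For the converse, assume $G$ is \TLF{} and let $C \subseteq G$ be compact. First I would use local compactness to choose a relatively compact, open, symmetric neighborhood $U$ of the compact set $C \cup \{1\}$ (replacing $U$ by $U \cup U\inv$ to arrange symmetry). Then $C \subseteq U$, so it suffices to prove that the open subgroup $\la U \ra = \bigcup_{n > 0} U^n$ is compact, for it is then a compact subgroup containing $C$. To establish this, I would adapt the covering argument from the proof of Lemma~\ref{lem:TLF_closure}. For each $y \in \overline{U^2}$, the set $y.U$ is an open neighborhood of $y$, hence meets the relatively compact set $U^2$; picking $h \in y.U \cap U^2$ and using $U = U\inv$ gives $y \in h.U$ with $h \in U^2$. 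Thus $\overline{U^2} \subseteq \bigcup_{h \in U^2} h.U$, and compactness of $\overline{U^2}$ yields finitely many $h_1, \dots, h_n \in U^2$ with $\overline{U^2} \subseteq \bigcup_{i=1}^n h_i.U$.

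Here the hypothesis enters: since $G$ is \TLF, the closure $K = \overline{\la h_1, \dots, h_n\ra}$ is a compact subgroup containing all the $h_i$, so $U^2 \subseteq K.U$. An easy induction then gives $U^n \subseteq K.U$ for every $n$ (from $U^{n+1} = U^n.U \subseteq K.U \cdot U = K.U^2 \subseteq K.U$), whence $\la U\ra \subseteq K.\overline{U}$, a compact set. As $\la U\ra$ is closed, being an open subgroup, it is compact, which completes the argument.

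The step I expect to be the crux is passing from the finite cover of $\overline{U^2}$ to a single compact subgroup $K$ absorbing $U^2$ in the sense $U^2 \subseteq K.U$: this is exactly the point at which topological local finiteness is invoked, converting the finitely many covering elements $h_i$ into a compact group. Once this is in hand, the inductive containment $U^n \subseteq K.U$ and the final compactness conclusion are routine.
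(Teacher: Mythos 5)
Your proof is correct and takes essentially the same approach as the paper's: cover $U^2$ by finitely many translates $h_i.U$, use topological local finiteness to enclose the $h_i$ in a compact subgroup $K$, deduce $U^n \subseteq K.U$ for all $n$, and conclude that the open (hence closed) subgroup $\langle U \rangle$ is compact. The paper only phrases the reduction slightly differently (replacing $G$ by the closed subgroup generated by the compact set, so as to cite the absorption argument of Lemma~\ref{lem:TLF_closure} verbatim), and your density step choosing $h \in y.U \cap U^2$ is harmless but unnecessary here, since when $G$ itself is topologically locally finite any finite set of covering elements automatically lies in a compact subgroup.
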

\begin{proof}
The `if' part is clear. We focus on the opposite implication and assume henceforth that $G$ is \TLF{}. Let $Q$
be a compact subset of $G$ such that $Q = Q\inv$. We must show that $Q$ is contained in a compact subgroup of
$G$. Up to replacing $G$ by the closed subgroup which is generated by $Q$, this amounts to showing that if $G$
is compactly generated, then it is compact. Let thus $U$ be a compact symmetric neighborhood of the identity
which generates $G$. There exist $g_1, \dots, g_n \in G$ such that $U^2 \subset \bigcup_{i = 1}^n g_i.U$. Now we
can conclude by the same argument as in the proof of Lemma~\ref{lem:TLF_closure}.
\end{proof}

\begin{cor}\label{cor:TLF=>amenable}
Let $G$ be a locally compact group which is \TLF. Then $G$ is amenable.
\end{cor}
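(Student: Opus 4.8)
The plan is to derive amenability from two standard permanence properties: compact groups are amenable (their normalized Haar measure furnishes an invariant mean), and a locally compact group that is a directed union of open amenable subgroups is itself amenable. Lemma~\ref{lem:TLF:compact} will serve as the bridge between the hypothesis that $G$ is \TLF{} and the compactness that feeds into these facts.

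First I would exhaust $G$ by compactly generated \emph{open} subgroups. Since $G$ is locally compact, I fix a compact symmetric neighborhood $U$ of the identity; then $\langle U \rangle$ is open, and for every finite subset $F \subset G$ the subgroup $H_F = \langle U \cup F \rangle$ is open (it contains the open subgroup $\langle U \rangle$) and compactly generated. As $F$ ranges over the finite subsets of $G$, the subgroups $H_F$ form a directed family whose union is all of $G$. Next I would show that each $H_F$ is compact: as a subgroup of the \TLF{} group $G$ it is itself \TLF{}, since subgroups of \TLF{} groups are \TLF{}; being moreover compactly generated, the reduction carried out in the proof of Lemma~\ref{lem:TLF:compact}, namely that a compactly generated \TLF{} group is compact, shows that $H_F$ is compact, and hence amenable.

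Finally I would invoke the permanence property that a locally compact group which is a directed union of open amenable subgroups is amenable, which yields the amenability of $G$. This concluding step is where a little care is required, and it is the reason I build the exhaustion from a neighborhood of the identity rather than from arbitrary compact subgroups supplied by Lemma~\ref{lem:TLF:compact}: the exhausting subgroups must be \emph{open}, not merely closed or compact, for the directed-union argument to apply cleanly. No genuine obstacle remains beyond correctly marshalling these classical permanence facts.
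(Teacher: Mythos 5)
Your proof is correct, but its concluding mechanism differs from the paper's. The paper's proof is a one-liner: Lemma~\ref{lem:TLF:compact} together with F\o{}lner's characterization of amenability. Unwound, this says: given a compact set $Q \subset G$, adjoin a compact identity neighborhood and apply the lemma to place the result inside a compact subgroup $H$; then $H$ is open (it contains a neighborhood of $1$), has finite positive Haar measure, and satisfies $kH = H$ for every $k \in Q$, so $H$ is an exact F\o{}lner set and amenability follows at once. You use the same pivot --- every compact subset lies in a compact, in fact compact open, subgroup --- but instead of verifying F\o{}lner's criterion you exhaust $G$ by the directed family of open subgroups $H_F = \langle U \cup F\rangle$, show each is compact, and invoke two permanence facts: amenability of compact groups and stability of amenability under directed unions of open amenable subgroups. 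This is a genuinely different (if closely related) packaging: it trades the measure-theoretic verification for a quoted closure property (whose standard proof, for \emph{open} subgroups, is itself a F\o{}lner argument, so the two routes nearly coincide after unwinding), and it establishes en route the stronger structural statement that a \TLF{} locally compact group is a directed union of compact open subgroups. One streamlining: you do not need to invoke the reduction inside the proof of Lemma~\ref{lem:TLF:compact}; its statement already provides a compact subgroup $K$ containing the compact set $U \cup F$, and since $H_F$ is open, hence closed, in $G$, it is a closed subgroup of $K$ and therefore compact. Finally, your insistence that the exhausting subgroups be open is indeed the right point of care --- it is exactly what makes the directed-union step (and, in the paper's version, the F\o{}lner step) go through cleanly.
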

\begin{proof}
Follows from Lemma~\ref{lem:TLF:compact} together with F\o{}lner's characterization of amenability.
\end{proof}

%

\section{On proper actions of totally disconnected groups on Hadamard spaces}

Let $X$ be a locally compact Hadamard space, namely a complete locally compact $\CAT0$ space. Let also $G$ be a
totally disconnected group acting continuously upon $X$. Any compact subgroup of $G$ fixes a point in $X$ by
\cite[Ch.~II, Cor.~2.8]{BH99}. Recall that $\Isom(X)$, endowed with the topology of uniform convergence on
compact subsets, is a locally compact group. In particular, if the $G$--action on $X$ is proper, then $G$ is
locally compact. The following basic fact will be useful:

\begin{lem}\label{lem:CompactOpen}
Assume that $G$ acts properly on $X$. Then every compact subgroup of $G$ is contained in a compact open subgroup
of $G$.
\end{lem}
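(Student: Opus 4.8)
The plan is to reduce the statement to a general fact about totally disconnected locally compact groups, using the geometric hypotheses only to put $G$ into the right category. Since the action is proper, $G$ is locally compact, and since $G$ is totally disconnected by assumption, it is a totally disconnected locally compact group. By van Dantzig's theorem such a group admits a basis of identity neighbourhoods consisting of compact open subgroups; in particular there exists a compact open subgroup $V < G$. The task is then to enlarge $V$ to a compact open subgroup containing the given compact subgroup $K$.

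The difficulty is that $K \cdot V$, while compact and open (it is covered by finitely many left cosets $kV$, as $K$ is compact and each $kV$ is open), need not be a subgroup. To remedy this I would first replace $V$ by a compact open subgroup normalized by $K$. Set $W = \bigcap_{k \in K} kVk\inv$. I claim $W$ is a compact open subgroup normalized by $K$. It is clearly a subgroup; it is closed, being an intersection of closed subgroups; and it is contained in $V$, hence compact. Normality under $K$ is immediate from the definition. For openness, observe that the conjugation map $K \times G \to G$, $(k,g) \mapsto k\inv g k$, is continuous and carries $K \times \{1\}$ into the open set $V$; by the tube lemma (here one uses that $K$ is compact) there is an open neighbourhood $N$ of the identity with $k\inv N k \subset V$ for all $k \in K$, that is, $N \subset W$. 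Alternatively, $N_G(V)$ is open since it contains the open subgroup $V$, so $K \cap N_G(V)$ has finite index in $K$; hence the family $\{kVk\inv : k \in K\}$ is finite and $W$ is a finite intersection of compact open subgroups.

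Once $W$ is available the conclusion is immediate. Since $K$ normalizes $W$, the product $KW = WK$ is a subgroup. It is compact, being the image of the compact set $K \times W$ under multiplication; it is open, being the union $\bigcup_{k\in K} kW$ of the open sets $kW$; and it contains $K = K\cdot\{1\}$. Thus $KW$ is the desired compact open subgroup containing $K$.

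I expect the only genuine obstacle to be the construction of the $K$-invariant compact open subgroup $W$, namely checking that the intersection of the conjugates of $V$ over the compact set $K$ remains open; both the tube-lemma argument and the finite-conjugates argument settle this cleanly. I note finally that the fixed-point property of compact subgroups and the full strength of properness are not actually needed here beyond ensuring that $G$ is totally disconnected and locally compact: the statement is a general feature of such groups.
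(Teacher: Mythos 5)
Your proof is correct, but it follows a genuinely different route from the paper's. You treat the lemma as a purely group-theoretic fact about totally disconnected locally compact groups: van Dantzig's theorem gives a compact open subgroup $V$, you replace it by the $K$-invariant compact open subgroup $W = \bigcap_{k \in K} kVk^{-1}$ (both of your openness arguments are sound: the tube-lemma argument, and the observation that $N_G(V)$ is open because it contains $V$, so $K \cap N_G(V)$ has finite index in $K$ and the intersection is in fact a finite one), and you conclude with the compact open subgroup $KW \supseteq K$. The paper instead obtains a compact open subgroup $C$ normalized by $K$ by quoting a fixed-point theorem of Baumgartner and Willis for the conjugation action of $K$ on the space $\mathcal{B}(G)$ of compact open subgroups, and then argues geometrically: $C$ fixes a point of $X$, its fixed-point set $X^C$ is closed, convex and $K$-invariant, so the compact group $K$ fixes a point $x \in X^C$; by properness $G_x$ is compact, and it is open because it contains $C$. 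Your argument is more elementary and more general --- it uses properness only to know that $G$ is locally compact, and establishes the statement for arbitrary totally disconnected locally compact groups --- whereas the paper's argument buys a stronger conclusion, namely that the compact open subgroup containing $K$ may be taken to be a point stabilizer $G_x$ with $x \in X$, which is the form actually invoked later in the proof of Lemma~\ref{lem:smooth}. Note, however, that this stronger form follows from your version in one extra line: your subgroup $KW$, being compact, fixes a point $x \in X$ by \cite[Ch.~II, Cor.~2.8]{BH99}, and then $G_x$ is open (it contains $KW$), compact (by properness), and contains $K$.
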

\begin{proof}
Let $K$ be a compact subgroup of $G$. Since $G$ is locally compact, the set $\mathcal B(G)$ of compact open
subgroups is nonempty and endowed with a canonical metric structure. Furthermore the $G$--action on $\mathcal
B(G)$ by conjugation is continuous. In particular, the group $K$ has a bounded orbit in $\mathcal B(G)$. By
\cite[Prop.~5]{BaumgartnerWillis06}, this implies that $K$ has a fixed point in $\mathcal B(G)$. In other words
$G$ possesses a compact open subgroup $C$ normalized by $K$. Since $C$ is compact, it has a fixed point in $X$.
Moreover, since $K$ normalizes $C$, it stabilizes the fixed point set $X^C$ of $C$ in $X$. Since $C$ itself is
compact, it fixes a point in $X$, hence in the closed convex subset $X^C$ which is $K$--invariant. This shows
that $C$ and $K$ have a common fixed point in $X$, say $x$. Since the $G$--action on $X$ is proper, the
stabilizer $G_x$ is compact. Since it contains $C$ it is open. Thus $K$ is contained in the compact open
subgroup $G_x$.
\end{proof}

We say that the $G$--action is \textbf{smooth} if $G_x$ is open in $G$ for each $x \in X$.  The term
\emph{smooth} is borrowed from the representation theory of $p$-adic groups.

The following lemma, though elementary, is crucial to the proof of the main results:
\begin{lem}\label{lem:smooth}
Assume that $G$ acts properly on $X$. Let  $(x_n)_{n \geq 0}$ be a sequence of points of $X$ and $(\gamma_n)_{n
\geq 0}$ be a sequence of elements of $G$ such that the sequence $(\gamma_n. x_n)_{n \geq 0}$ has a subsequence
converging to some $c \in X$. Then we have the following:
\begin{itemize}
\item[(i)] There exists a sequence $(x'_n)_{n \geq 0}$ of points of $X$ such that, given any $g \in G$ with
$\lim_{n \to \infty} d(x_n, g.x_n) = 0$, we have $g. x'_n = x'_n$ for all but a finite number of indices $n \geq
0$.

\item[(ii)] Assume moreover that the $G$--action is smooth. Then, given any $g \in G$ such that $\lim_{n \to
\infty} d(x_n, g.x_n) = l$, there exists $k \in G$ such that $d(c, k.c) = l$ and that the set $\{ n \geq 0 \; |
\; \gamma_n g \gamma_n\inv \in k.G_c \}$ is infinite.
\end{itemize}
\end{lem}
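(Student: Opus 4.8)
The plan is to transport all the data to the limit point $c$ by conjugating with $\gamma_n$, and then to convert the metric information $\lim_n d(x_n, g.x_n)$ into an \emph{algebraic} statement about the conjugate $g_n := \gamma_n g \gamma_n\inv$, exploiting the total disconnectedness of $G$. After extracting a subsequence and relabelling, I may assume $\gamma_n.x_n \to c$; write $y_n = \gamma_n.x_n$. The observation underlying both parts is that, since $g_n$ is an isometry and $y_n \to c$, the triangle inequality yields $\bigl| d(c, g_n.c) - d(y_n, g_n.y_n) \bigr| \leq 2\, d(c, y_n) \to 0$, while $d(y_n, g_n.y_n) = d(x_n, g.x_n)$. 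Hence $d(c, g_n.c)$ tends to the same limit as $d(x_n, g.x_n)$.

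For part (i) this limit is $0$, so $g_n.c \to c$. By properness the set $\{ h \in G : d(c, h.c) \leq 1 \}$ is relatively compact (using that closed balls in $X$ are compact), so $(g_n)$ eventually lies in a fixed compact set, and every subsequential limit $h$ of $(g_n)$ satisfies $h.c = c$, i.e.\ $h \in G_c$. Now total disconnectedness enters: by Lemma~\ref{lem:CompactOpen} the compact group $G_c$ is contained in a compact open subgroup $U$ of $G$. Since $U$ is open and every subsequential limit of $(g_n)$ lies in $G_c \subseteq U$, it follows that $g_n \in U$ for all but finitely many $n$ (otherwise a subsequence in the complement of $U$ would have a limit in $G_c \subseteq U$, contradicting openness of $U$). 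As $U$ is compact it fixes a point $p \in X$; setting $x'_n = \gamma_n\inv.p$ then gives, for all large $n$, $g.x'_n = \gamma_n\inv g_n.p = \gamma_n\inv.p = x'_n$, since $g_n \in U$ fixes $p$. This is the required sequence.

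For part (ii) the limit is $l$, so $d(c, g_n.c) \to l$ stays bounded; properness again forces $(g_n)$ into a fixed compact set, and I extract a subsequence $g_{n_j} \to k \in G$. By continuity $d(c, k.c) = \lim_j d(c, g_{n_j}.c) = l$, which is the first property of $k$. For the second, the smoothness hypothesis is decisive: $G_c$ is now \emph{open}. From $g_{n_j} \to k$ I get $k\inv g_{n_j} \to 1$, so $k\inv g_{n_j}$ lies in the open subgroup $G_c$ for all large $j$; equivalently $g_{n_j} \in k.G_c$. Thus $\{ n : \gamma_n g \gamma_n\inv \in k.G_c \}$ is infinite, as claimed.

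The conceptual heart of the argument, and its only genuinely nontrivial point, is the passage from the approximate relation $d(c, g_n.c) \to l$ to an exact membership statement: metric closeness says nothing in a general $\CAT0$ space, and it is precisely the local structure of the totally disconnected group $G$ that rescues us. In part (i) this is the existence of a compact open subgroup $U$ absorbing $G_c$, whose fixed points furnish the $x'_n$; in part (ii) it is the openness of $G_c$ supplied by smoothness, which converts $k\inv g_{n_j} \to 1$ into eventual membership in $G_c$. Properness plays the supporting but indispensable role of preventing the conjugates $g_n$ from escaping to infinity, so that subsequential limits exist at all.
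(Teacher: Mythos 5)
Your proof is correct and follows essentially the same route as the paper's: conjugate by $\gamma_n$ to transport the displacement condition to the limit point $c$, use properness to keep the conjugates $\gamma_n g \gamma_n^{-1}$ in a compact set, then invoke Lemma~\ref{lem:CompactOpen} (together with the fixed point theorem for compact groups) in part (i) and the openness of $G_c$ coming from smoothness in part (ii). The only cosmetic difference is that in (i) you argue via all subsequential limits landing in $G_c \subseteq U$ instead of extracting a single convergent subsequence as the paper does, which yields the cofinite membership in $U$ a touch more directly.
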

\begin{proof}
Up to extracting, we may and shall assume that $\lim_{n \to \infty} \gamma_n.x_n = c$. Let $g \in G$ be such
that $\lim_{n \to \infty} d(x_n, g.x_n) = l$. We have
$$\lim_{n \to \infty} d(x_n, g.x_n) =\lim_{n \to \infty}
d(\gamma_n. x_n, (\gamma_n g \gamma_n\inv) \gamma_n .x_n) = l.$$
Therefore, it follows that $\lim_{n \to \infty}
d(c, \gamma_n g \gamma_n\inv.c) = l$. In particular, the set $\{\gamma_n g \gamma_n\inv\}_{n \geq 0}$ is
relatively compact in $G$. Hence, up to extracting, we may assume that the sequence $(\gamma_n g
\gamma_n\inv)_{n \geq 0}$ converges. By construction, its limit $k$ maps the point $c$ to a point $c'$ such that
$d(c, c')=l$.

Assume first that $l=0$. Thus $c= c'$. By Lemma~\ref{lem:CompactOpen}, there exists $x \in X$ such that $G_c
\subset G_x$ and $G_x$ is compact open. Since the sequence $(\gamma_n g \gamma_n\inv)_{n \geq 0}$ converges to
$k \in G_x$, it follows that the set $\{ n \geq 0 \; | \; \gamma_n g \gamma_n\inv \in G_{x} \}$ contains all
sufficiently large $n$. Now, setting $x'_n = \gamma_n\inv.x$, we obtain that $g$ fixes $x'_n$ for almost all
$n$. Thus (i) holds.

Assume now that $l$ is arbitrary and that $G$ acts smoothly. Then $G_c$ is open, hence so is the coset $k.G_c =
\{h \in G \;|\; h.c = c'\}$. Therefore, for all $n$ sufficiently large, we have $\gamma_n g \gamma_n\inv \in
k.G_c$ and (ii) holds.
\end{proof}

Recall that, given $\gamma \in G$, the displacement function of $\gamma$ is the map $d_\gamma : X \to \R_+ : x
\mapsto d(x, \gamma.x)$. Its infimum is denoted by $|\gamma|$ and is called the translation length of $\gamma$
in $X$.

Note that when $G$ is cocompact,  the existence of a sequence $(\gamma_n)_{n \geq 0}$ as in the lemma is
automatic. In particular, we obtain (see \cite[Ch.~II, Sect.~6.1--6.3]{BH99}):

\begin{cor}\label{cor:parabolic}
Assume that $G$ acts properly and cocompactly on $X$. Then every element $\gamma \in G$ with $|\gamma| = 0$ has
a fixed point in $X$, and the set $\{ |\gamma| \; | \; \gamma \in G\}$ of translation lengths of elements of $G$
is discrete at $0$. Furthermore, if the $G$--action is smooth, then it is semisimple: any element acts as an
elliptic or a hyperbolic isometry.
\end{cor}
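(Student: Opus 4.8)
The plan is to exploit the remark preceding the statement: since $G$ is cocompact, for \emph{any} sequence $(x_n)$ of points of $X$ there is a sequence $(\gamma_n)$ in $G$ such that $(\gamma_n.x_n)$ stays in a fixed compact set, hence subconverges to some $c \in X$. This places us exactly in the setting of Lemma~\ref{lem:smooth}, which will do most of the work. For the first assertion, let $\gamma \in G$ with $|\gamma| = 0$. Choose $(x_n)$ with $d(x_n, \gamma.x_n) \to 0$ and, by cocompactness, $(\gamma_n)$ with $\gamma_n.x_n \to c$ after passing to a subsequence. Applying Lemma~\ref{lem:smooth}(i) to these sequences with the fixed element $g = \gamma$ yields points $x'_n$ fixed by $\gamma$ for all large $n$; in particular $\gamma$ is elliptic. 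Note that no smoothness hypothesis is needed here.

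For the semisimplicity statement, assume the action is smooth and fix $\gamma \in G$; set $l = |\gamma|$ and pick $(x_n)$ with $d(x_n, \gamma.x_n) \to l$, together with $(\gamma_n)$ as above so that $\gamma_n.x_n \to c$. Lemma~\ref{lem:smooth}(ii) then provides $k \in G$ with $d(c, k.c) = l$ and an infinite set of indices $n$ with $\gamma_n \gamma \gamma_n\inv \in k.G_c$, that is $\gamma_n \gamma \gamma_n\inv.c = k.c$. For such an $n$ we get $d(c, \gamma_n \gamma \gamma_n\inv.c) = l$, and applying the isometry $\gamma_n\inv$ shows that the point $y = \gamma_n\inv.c$ satisfies $d(y, \gamma.y) = l = |\gamma|$. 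Thus the displacement function $d_\gamma$ attains its infimum, so $\gamma$ is semisimple; it is elliptic when $l = 0$ and hyperbolic when $l > 0$, by the standard dichotomy for $\CAT0$ isometries \cite[Ch.~II, Sect.~6.1--6.3]{BH99}.

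The main obstacle is the discreteness of translation lengths at $0$, because here the relevant group element varies along the sequence while Lemma~\ref{lem:smooth}(i) is stated for a single fixed $g$; I therefore expect to reprove its core argument directly. Suppose, for contradiction, that there are $g_m \in G$ with $0 < |g_m| \to 0$. Choose $x_m$ with $d(x_m, g_m.x_m) < |g_m| + 1/m$ and, by cocompactness, $\gamma_m$ with $\gamma_m.x_m \to c$ after extraction. Setting $h_m = \gamma_m g_m \gamma_m\inv$, the isometry invariance of $d$ gives $d(\gamma_m.x_m, h_m.\gamma_m.x_m) = d(x_m, g_m.x_m) \to 0$, whence $d(c, h_m.c) \to 0$. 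Thus $\{h_m.c\}$ is bounded, so $\{h_m\}$ is relatively compact by properness and subconverges to some $h$ with $h.c = c$. By Lemma~\ref{lem:CompactOpen}, $h$ lies in a compact open subgroup $U$; since $U$ is open we have $h_m \in U$ for all large $m$, and since $U$ is compact the closure $\overline{\langle h_m \rangle}$ is a compact subgroup and hence fixes a point of $X$ \cite[Ch.~II, Cor.~2.8]{BH99}. Therefore $|g_m| = |h_m| = 0$, contradicting $|g_m| > 0$ and completing the proof.
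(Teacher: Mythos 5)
Your proof is correct and follows essentially the same route as the paper's: the paper likewise produces the sequences $(x_n)$ and $(\gamma_n)$ by cocompactness and then applies Lemma~\ref{lem:smooth}(i) for ellipticity when $|\gamma|=0$, Lemma~\ref{lem:smooth}(ii) for attainment of the infimum of $d_\gamma$ in the smooth case, and a conjugate-and-extract argument for discreteness of translation lengths at $0$. The only divergence is in that last part, and it is cosmetic: where you reprove the compact-open-subgroup mechanism directly (because, as you correctly flag, Lemma~\ref{lem:smooth}(i) is stated for a single fixed $g$), the paper passes to convergent conjugates $g_n \to g$ fixing a limit point and then cites Lemma~\ref{lem:smooth}(i) loosely---your explicit argument via Lemma~\ref{lem:CompactOpen} is precisely what that citation unwinds to.
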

\begin{proof}
Let $\gamma \in G$ and choose $x_n \in X$ so that $d(x_n, \gamma.x_n)$ tends to $|\gamma|$ as $n$ tends to
infinity. Since $X/G$ is compact, there exists $\gamma_n \in G$ such that $\{\gamma_n.x_n\}$ is relatively
compact in $X$. Thus, up to extracting, we may assume that $(\gamma_n.x_n)_{n \geq 0}$ converges to some $c \in
X$. If $|\gamma| = 0$, then Lemma~\ref{lem:smooth}(i) shows that $\gamma_n \gamma \gamma_n\inv$ is elliptic for
some $n$, hence so is $\gamma$. Similarly, if the $G$--action is smooth, Lemma~\ref{lem:smooth} shows that the
displacement function $d_\gamma$ attains its infimum $|\gamma|$.

Let now $(g_n)_{n \geq 0}$ be a sequence of elements of $G$ such that $|g_n|$ tends to~$0$ as $n$ tends to
infinity and assume in order to obtain a contradiction that $|g_n| > 0$ for all $n$. Since $X/G$ is compact, we
may and shall assume, up to replacing $g_n$ by a conjugate, that there exists $c \in X$, $r \in \R$ and $x_n \in
X$ such that $d(c, x_n)< r$ for all $n$ and that $d(x_n, g_n.x_n)$ tends to $0$ as $n$ tends to infinity. Up to
extracting, we may assume that the sequence $(x_n)_{n \geq 0}$ converges to some $x \in X$. Since $\{g_n\}_{n
\geq 0}$ is relatively compact in $G$, we may assume, up to a further extraction, that $(g_n)_{n \geq 0}$
converges to some $g \in G$. Clearly $g$ fixes $x$. By Lemma~\ref{lem:smooth}(i), this implies that $g_n$ is
elliptic for all $n$ sufficiently large. Thus $|g_n|= 0$, which is absurd.
\end{proof}

Recall from \cite[Th.~A]{Br99} that if $X$ is a $\CAT0$ cell complex with finitely many isometry types of cells,
and if the $G$--action is cellular, then it is semisimple and the set of translation lengths of elements of $G$
is discrete at $0$. Thus the hypothesis that $X/G$ is compact is superfluous in that special case. Note that $G$
is automatically smooth in this case.

We record the following observation:
\begin{lem}\label{lem:smooth:flats}
Assume that $G$ acts properly and smoothly on $X$. Let $F \subset X$ be a flat and let $\varphi : G_{\{F\}} \to
\Isom(F)$ be the homomorphism induced by the action of the stabilizer $G_{\{F\}}$ of $F$ upon $F$. Then
$\varphi(G_{\{F\}})$ is a discrete subgroup of $\Isom(F)$. In particular $G_{\{F\}}/\Ker \varphi$ is virtually
abelian and $G_{\{F\}}$ possesses a finite index subgroup which fixes a point in $\bd F$.
\end{lem}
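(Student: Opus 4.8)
The substance of the lemma is the discreteness of $\Delta := \varphi(G_{\{F\}})$ in $\Isom(F) \cong \Isom(\R^d)$, where $d = \dim F$; once this is known, the two remaining assertions are purely classical facts about discrete groups of Euclidean isometries. I would therefore organize the proof in two stages. For the first stage, recall that a subgroup of a topological group is discrete as soon as its identity is isolated, so it suffices to show that any sequence $(g_n)$ in $G_{\{F\}}$ with $\varphi(g_n) \to \mathrm{id}_F$ and $\varphi(g_n) \neq \mathrm{id}_F$ leads to a contradiction.

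The main obstacle is that the topology on $\Isom(F)$ (uniform convergence on compact subsets of $F$) is weaker than the topology on $G$ (uniform convergence on compact subsets of $X$), so $\varphi(g_n) \to \mathrm{id}_F$ does not by itself control $(g_n)$ inside $G$. I would bridge this gap using properness. Fixing $x \in F$, the relation $d(x, g_n.x) = d(x, \varphi(g_n).x) \to 0$ shows that $\{g_n.x\}$ is bounded, whence $\{g_n\}$ is relatively compact in $G$ by properness of the action. Passing to a subsequence, $g_n \to g$ in $G$; since $G_{\{F\}}$ is closed and $\varphi$ is continuous, $g \in G_{\{F\}}$ and $\varphi(g) = \mathrm{id}_F$. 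Replacing $g_n$ by $h_n := g\inv g_n$, I obtain a sequence in $G_{\{F\}}$ with $h_n \to 1$ in $G$ and $\varphi(h_n) = \varphi(g_n)$. Now smoothness enters: choose $d+1$ affinely independent points $y_0, \dots, y_d \in F$, so that the only isometry of $F$ fixing all of them is the identity. Each stabilizer $G_{y_i}$ is open by smoothness, hence so is the finite intersection $U = \bigcap_{i=0}^d G_{y_i}$, which contains $1$; therefore $h_n \in U$ for all large $n$. For such $n$ the isometry $\varphi(h_n)$ fixes $y_0, \dots, y_d$ and is thus trivial, so $\varphi(g_n) = \varphi(h_n) = \mathrm{id}_F$, contradicting $\varphi(g_n) \neq \mathrm{id}_F$. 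This proves that $\Delta$ is discrete. I note in passing that the same spanning-points observation shows $\Ker \varphi = G_{\{F\}} \cap \bigcap_i G_{y_i}$ is open in $G_{\{F\}}$.

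For the second stage, the classical structure theory of discrete groups of Euclidean isometries (Bieberbach's theorems) applies to $\Delta \leq \Isom(\R^d)$: every such group is finitely generated and virtually abelian, which gives that $G_{\{F\}}/\Ker\varphi \cong \Delta$ is virtually abelian. For the boundary fixed point, either $\Delta$ is finite, in which case $\Ker\varphi$ itself has finite index in $G_{\{F\}}$ and fixes every point of $\bd F$, or $\Delta$ preserves an invariant subflat $E \subseteq F$ of positive dimension on which it acts cocompactly, so that a finite-index subgroup of $\Delta$ acts on $E$ as a lattice of translations; such translations fix every point of $\bd E \subseteq \bd F$. In either case I obtain a finite-index subgroup of $\Delta$ fixing some $\xi \in \bd F$, and its preimage under the surjection $\varphi$ is the desired finite-index subgroup of $G_{\{F\}}$. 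The only place where the hypotheses on the $G$--action are genuinely used is the discreteness argument of the first stage; I expect that to be the crux, the delicate point being precisely the passage from convergence in $\Isom(F)$ to control inside $G$.
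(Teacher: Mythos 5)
Your proposal is correct, and its crux --- the discreteness of $\varphi(G_{\{F\}})$ --- rests on exactly the same two ingredients, used in exactly the same way, as the paper's proof: properness of the action converts convergence of $\varphi(g_n)$ in $\Isom(F)$ (via bounded displacement of a base point) into relative compactness, hence subconvergence, of $(g_n)$ in $G$; and smoothness makes the joint stabilizer of $\dim F + 1$ affinely independent points of $F$ an open subgroup of $G$, forcing elements close to the identity to act trivially on $F$. The packaging differs mildly: the paper proves that $\varphi(G_{\{F\}})$ acts properly discontinuously on $F$, in two steps (finite point stabilizers, because $G_{x_0, \dots, x_n}$ is open in the compact open subgroup $G_x$; then non-accumulation of orbits, via the same convergence-plus-open-coset argument you use), whereas you isolate the identity directly after normalizing by the limit $g$ --- a slightly more streamlined route to the same statement. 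The concluding classical step also differs: for the boundary fixed point, the paper extracts a hyperbolic element of the infinite discrete group, passes to a power centralized by a finite-index subgroup, and takes its attractive endpoint in $\bd F$, while you invoke the Bieberbach-type invariant subflat $E$ on which a finite-index subgroup acts through translations and therefore fixes $\bd E$ pointwise; both are standard facts about discrete Euclidean isometry groups, the paper's version needing only the existence of one hyperbolic element, yours the fuller structure theory (your parenthetical claim of finite generation, while true, is not needed). Neither difference affects correctness.
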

\begin{proof}
Let $\Gamma = \varphi(G_{\{F\}})$. We must show that $\Gamma < \Isom(F)$ acts properly discontinuously on $F$.

Let $x \in F$ be any point. We may choose $n+1$ points $x_0, \dots, x_n$, where $n = \dim F$, in such a way that
the group $G_{\{F\}, x_0, \dots, x_n}$ fixes pointwise a neighborhood of $x$ in $F$. Therefore, the group
$G_{\{F\}, x_0, \dots, x_n}$ is contained in $\Ker \varphi$. Since $G_{x_0, \dots, x_n}$ is an open subgroup of
the compact open subgroup $G_x$, it follows that the index of $G_{x_0, \dots, x_n}$ in $G_x$ is finite. In
particular, for each $x \in F$, the index of $\Ker \varphi$ in $G_{\{F\}, x}$ is finite or, in other words, for
each $x \in F$, the stabilizer $\Gamma_x$ is finite.

Suppose now that the $\Gamma$--action on $F$ is not properly discontinuous. Then there exist $x_0 \in F$ and $r
\in \R_+$ such that the set $S_\Gamma = \{\gamma \in \Gamma \; | \; d(\gamma. x_0, x_0)\}$ is infinite. Since
$\Gamma_x$ is finite for each $x \in X$, it follows that the set $S_0 = \{\gamma.x_0 \; | \; \gamma \in
S_\Gamma\}$ is infinite. Let $x_1 \in F$ be a cluster point of $S_0$. Let also $(g_n)_{n \geq 0}$ be a sequence
of elements of $G_{\{F\}}$ such that $\lim g_n.x_0 = x_1$ and that $g_m.x_0 \neq g_n.x_0$ for $m \neq n$. Since
$\{g_n\}_{n \geq 0}$ is relatively compact, we may assume that $(g_n)_{n \geq 0}$ converges to some $g \in G$
such that $g.x_0 = x_1$. Since $G_{x_0}$ is open in $G$, so is $g.G_{x_0}$. Therefore, we have $g_n.x_0 = x_1$
for all sufficiently large $n$. This contradicts the fact that $g_m.x_0 \neq g_n.x_0$ for $m \neq n$. Thus
$\Gamma$ is a discrete subgroup of $\Isom(F)$.

The fact that $\Gamma$ is virtually abelian now follows from \cite[Cor.~4.1.13]{Thurston97}. It remains to show
that $\Gamma$ has a finite index subgroup which fixes an element in the sphere at infinity $\bd F$. This is
trivial if $\Gamma$ is finite. If $\Gamma$ is infinite, then there exists an element $\gamma \in \Gamma$ which
acts as a hyperbolic isometry on $F$. Some power of $\gamma$ is centralized by a finite index subgroup $\Gamma_0
< \Gamma$. Therefore, the group $\Gamma_0$ fixes the unique attractive fixed point of $\gamma$ in $\bd F$.
\end{proof}

\section{Projective limits of horoballs: the space $X_\xi$}\label{sect:X_xi}

The purpose of this section is to study the main geometrical tool of this paper. In the first 
subsection, we collect some subsidiary facts on metric geometry.

\subsection{On metric spaces of bounded geometry}

Let $(X,d)$ be any metric space. Given $\vareps >0$, a subset $N \subset X$ is called \textbf{$\vareps$--sparse}
if $d(x, x') \geq \vareps$ for all $x \neq x' \in  N$. Note that a $\vareps$--sparse subset is discrete; in
particular, if it is contained in a compact subset, then it is finite. Given a subset $C \subset X$, we denote
by $n_\vareps(C)$ the maximal cardinality of a $\vareps$--sparse subset of $C$. Note that if $n_\vareps(C)$ is
finite, then a $\vareps$--sparse subset $N \subset C$ of maximal possible cardinality is necessarily
$\vareps$--dense: every point of $C$ is at distance less than $\vareps$ from some point of $N$. Given $r>0$ and
$\vareps >0$, we also set
$$n_{r, \vareps}(X) = \sup_{x \in X} n_\vareps(B(x, r)),$$
where $B(x, r)$ denotes the open ball of radius $r$ centered at $x$.

We say that the metric space $(X, d)$ is \textbf{of bounded geometry} if for all $r > \vareps >0$, one has
$n_{r, \vareps}(X) < \infty$. We record some elementary facts for later references:

\begin{lem}\label{lem:BoundedGeom}
We have the following:
\begin{itemize}
\item[(i)] If $(X, d)$ is complete and of bounded geometry, then it is proper, i.e. any closed ball is compact.

\item[(ii)] If $(X, d)$ is locally compact and $X/\Isom(X)$ is compact, then $X$ is of bounded geometry.
\end{itemize}
\end{lem}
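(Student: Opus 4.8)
For part (i), the plan is to deduce properness from total boundedness together with completeness: a complete, totally bounded metric space is compact, so it suffices to show that each closed ball $\overline{B}(x_0,R)$ admits a finite $\varepsilon$-net for all small $\varepsilon$. Fix $0<\varepsilon<R$ and let $N$ be a maximal $\varepsilon$-sparse subset of $\overline{B}(x_0,R)$. Since $\overline{B}(x_0,R)\subseteq B(x_0,R+1)$, the bounded geometry hypothesis gives $|N|\le n_\varepsilon(B(x_0,R+1))\le n_{R+1,\varepsilon}(X)<\infty$. By maximality $N$ is $\varepsilon$-dense in $\overline{B}(x_0,R)$, hence is a finite $\varepsilon$-net; thus $\overline{B}(x_0,R)$ is totally bounded, and being a closed subset of the complete space $X$ it is compact. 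This is the easy half and presents no real obstacle.

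For part (ii) I would first upgrade local compactness to a uniform statement. For each $p$ local compactness furnishes $\rho_p>0$ with $\overline{B}(p,\rho_p)$ compact; covering the compact image $K$ of $X$ in $X/\Isom(X)$ by finitely many balls $B(p_i,\rho_{p_i}/2)$ produces a single $\rho_0>0$ such that $\overline{B}(y,\rho_0)$ is compact for every $y\in K$, and since $\Isom(X)\cdot K=X$ and isometries carry balls to balls of the same radius, $\overline{B}(x,\rho_0)$ is in fact compact for \emph{every} $x\in X$. (As a byproduct $X$ is automatically complete, since any Cauchy sequence eventually lies in one such compact $\rho_0$-ball.) Next I would reduce the problem to a compact set: isometries preserve both balls and $\varepsilon$-sparse subsets, so $n_\varepsilon(B(x,r))=n_\varepsilon(B(gx,r))$ for any $g\in\Isom(X)$, and choosing $g$ with $gx\in K$ gives $n_{r,\varepsilon}(X)=\sup_{z\in K}n_\varepsilon(B(z,r))$. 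It therefore suffices to show that the closed $r$-neighborhood $\overline{N_r(K)}$ of $K$ is compact, for then $n_\varepsilon(B(z,r))\le n_\varepsilon(\overline{N_r(K)})<\infty$ uniformly in $z\in K$.

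To handle $\overline{N_r(K)}$ I would use the following elementary consequence of the triangle inequality and the uniform $\rho_0$: if $A$ is compact, then $N_{\rho_0/2}(A)$ is relatively compact. Indeed, covering $A$ by finitely many $B(a_i,\rho_0/2)$ and using $d(x,a)<\rho_0/2$, $d(a,a_i)<\rho_0/2$, one gets $N_{\rho_0/2}(A)\subseteq\bigcup_i\overline{B}(a_i,\rho_0)$, a finite union of compacta. Starting from $A_0=K$ and iterating $A_{j+1}=\overline{N_{\rho_0/2}(A_j)}$ yields a nested sequence of compact sets, and $\overline{N_r(K)}$ is compact as soon as it is reached by finitely many such $(\rho_0/2)$-steps from $K$.

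The main obstacle is exactly this last point: in a general metric space the reverse inclusion $B(x_0,(j+1)\rho_0/2)\subseteq N_{\rho_0/2}(B(x_0,j\rho_0/2))$ may fail, because there need not be any intermediate point, so the compact sets $A_j$ need not exhaust $\overline{N_r(K)}$. When $X$ is a geodesic (length) space — which is the situation in every application in this paper, $X$ being $\CAT0$ — intermediate points exist along geodesics, the iteration reaches every point of $\overline{N_r(K)}$, and compactness follows (equivalently, one invokes the Hopf--Rinow theorem: a complete, locally compact length space is proper). In the purely metric case one would instead argue via the clopen $(\rho_0/2)$-chain components of $X$, which are pairwise at distance at least $\rho_0/2$, are permuted by $\Isom(X)$, and fall into finitely many isometry types by cocompactness, and bound the sparse counts component by component.
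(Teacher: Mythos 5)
Your part (i) is correct and is essentially the paper's own argument: both deduce compactness of a closed ball from completeness plus total boundedness, the only cosmetic difference being that the paper extracts a cluster point of an infinite subset by a nested-ball construction, while you exhibit finite $\varepsilon$-nets directly via maximal $\varepsilon$-sparse subsets.

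For part (ii) the paper offers no proof at all (it is dismissed as ``a standard exercise''), so only correctness can be judged. Your main line --- uniform local compactness from cocompactness, automatic completeness, the reduction $n_{r,\varepsilon}(X)=\sup_{z\in K}n_\varepsilon(B(z,r))$, and compactness of the closed $r$-neighborhood of $K$ via Hopf--Rinow --- is correct and covers every application of the lemma in the paper, where the space is always $\CAT0$, hence geodesic. You were also right that compactness of that neighborhood is the crux and that your iteration breaks down when intermediate points are unavailable. The genuine error is your final sentence: the proposed repair by $(\rho_0/2)$-chain components cannot work, because statement (ii) is simply \emph{false} for arbitrary metric spaces. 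Take $X$ to be an infinite set with $d(x,y)=1$ for all $x\neq y$. This space is discrete, hence locally compact; its isometry group is the full symmetric group, which acts transitively, so $X/\Isom(X)$ is a single point; yet $n_{2,1}(X)=\infty$, since $X$ itself is a $1$-sparse subset of any ball of radius $2$. In this example your chain components are exactly the singletons, they form a single isometry type, and each has trivially bounded sparse counts --- the failure is precisely that a ball of radius $r$ can meet infinitely many components, which no ``component by component'' bound can detect. So the correct conclusion is the one you half-drew and then abandoned: as literally stated for ``any metric space'', (ii) requires a length-space (or comparable connectivity) hypothesis --- a hypothesis that is harmless here, since the lemma is only ever applied to Hadamard spaces, and under which your Hopf--Rinow argument is a complete proof.
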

\begin{proof}
(i). Follows from the characterization of compact metric spaces as those metric spaces which are complete and
totally bounded. 
The argument goes as follows. Let $B$ be a closed ball in $X$ and $S$ be an infinite set of points of $B$. Since
$X$ is of bounded geometry, the ball $B$ can be covered by a finite number of balls of radius $1$. Thus there
exists $b_0 \in B$ such that the ball $B(b_0, 1)$ contains an infinite subset of $S$. Repeating this argument
inductively, we construct a sequence $(b_n)_{n \geq 0}$ of points of $B$ such that $B(b_n, 2^{-n})$ contains an
infinite subset of $S$ and that $b_{n+1} \in B(b_{n}, 2^{-n})$. In particular the sequence $(b_n)_{n \geq 0}$ is
Cauchy. Let $b$ denote its limit. Clearly $b$ is a cluster point of $S$. Hence $B$ is compact.

\medskip \noindent The proof of (ii) is a standard exercise and will be omitted here.
\end{proof}

%
%

\subsection{The space $X_\xi$ and the refined boundary $\bdfine X$}

Let $X$ be any $\CAT0$ space. Given any point $\xi \in \partial_\infty X$ in the visual boundary of $X$, we now
describe a canonical construction which attaches a $\CAT0$ space $X_\xi$ to $\xi$. Any closed horoball centered
at $\xi$ is a closed convex subset of $X$. The collection of all of these horoballs form a chain of subspaces of
$X$. Endowing this chain with the orthogonal projections, we obtain a projective system of $\CAT0$ spaces. By
definition, the space $X_\xi$ is the metric completion of the projective limit of this system. Note that the
projective limit itself need not be complete even if $X$ is so; it is therefore important to take a completion
since we want to deal with Hadamard spaces. The space $X_\xi$ is endowed with a canonical surjective projection
$$\pi_\xi : X \to X_\xi$$
induced by the orthogonal projections onto horoballs. Note that $\pi_\xi$ is $1$--Lipschitz: it does not
increase distances.

There is a more down-to-earth description of $X_\xi$ which goes as follows. Let $X^*_\xi$ be the set of all
geodesics rays of $X$ which point toward $\xi$. The set $X^*_\xi$ is endowed with a pseudo-distance defined by:
$$d(\rho, \rho') = \inf_{t, t' \geq 0} d(\rho(t), \rho'(t')).$$
The space $X_\xi$ is the completion of the quotient of $X^*_\xi$ be the relation which identifies two rays at
distance~$0$, namely two rays which are strongly asymptotic. It is readily verified that this construction
yields the same space as the preceding one. Note that $X_\xi$ need not be locally compact, even if $X$ is so.

The fact that $\pi_\xi$ does not increase distances yields the following:
\begin{lem}\label{lem:eps-sparse}
Let $\xi \in \bd X$ and $r, \vareps > 0$ be positive numbers. Let $x_0, x_1, \dots, x_n \in X_\xi$ be such that
$d(x_0, x_i) < r$ for each $i$ and that the set $\{x_1, \dots, x_n\}$ is $\vareps$--sparse. Then there exist
$y_0, y_1, \dots, y_n, y_{n+1} \in X$ such that $d(y_0, y_i) < r$ for each $i$ and that the set $\{y_1, \dots,
y_n, y_{n+1}\}$ is $\vareps$--sparse.
\end{lem}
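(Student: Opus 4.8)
The plan is to lift the configuration to a single, sufficiently deep closed horoball centred at $\xi$, where the orthogonal projection realises the required distances, and then to produce the extra point $y_{n+1}$ by sliding $y_0$ towards $\xi$. Using that $\pi_\xi(X)$ is dense in $X_\xi$, I first choose $z_0, z_1, \dots, z_n \in X$ with $\pi_\xi(z_i) = x_i$ (the case of an arbitrary point of the completion $X_\xi$ follows by approximation, the strict inequality $d(x_0, x_i) < r$ leaving room; I comment on this below). Let $b$ be the Busemann function at $\xi$ and, for a level $L$, let $\mathcal H_L = \{b \le -L\}$ be the corresponding closed horoball, with orthogonal projection $\proj_L \colon X \to \mathcal H_L$.

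Two facts drive the argument. First, since $\pi_\xi$ is, by construction, the (completed) limit of the projections onto deeper and deeper horoballs, one has $\pi_\xi \circ \proj_L = \pi_\xi$; consequently $\pi_\xi(\proj_L(z_i)) = x_i$ for every $L$, and as $\pi_\xi$ does not increase distances, \emph{any} points taken in the fibres over the $x_i$ are automatically $\vareps$--sparse:
\[ d(\proj_L(z_i), \proj_L(z_j)) \ge d(x_i, x_j) \ge \vareps . \]
Thus sparseness is inherited for free from the $1$--Lipschitz property of $\pi_\xi$. Second, because orthogonal projections onto the nested convex horoballs compose and do not increase distances, $L \mapsto d(\proj_L(z_0), \proj_L(z_i))$ is non-increasing and tends to $d_{X_\xi}(x_0, x_i) < r$. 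Choosing $L$ large enough that all the $z_i$ lie outside $\mathcal H_L$ and that $d(\proj_L(z_0), \proj_L(z_i)) < r$ for every $i = 1, \dots, n$ simultaneously, I set $y_0 = \proj_L(z_0)$ and $y_i = \proj_L(z_i)$. Since each $z_i$ is exterior to the closed convex horoball, its projection lies on the bounding horosphere, so $b(y_i) = -L$ for all $i = 0, \dots, n$.

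To construct $y_{n+1}$, assume as in the definition of bounded geometry that $r > \vareps$, fix $t$ with $\vareps \le t < r$, and let $y_{n+1}$ be the point at distance $t$ from $y_0$ along the geodesic ray issuing from $y_0$ towards $\xi$. Then $b(y_{n+1}) = -L - t$, while $b(y_i) = -L$ for $i \le n$; since $b$ is $1$--Lipschitz,
\[ d(y_{n+1}, y_i) \ge |b(y_{n+1}) - b(y_i)| = t \ge \vareps \qquad (i = 1, \dots, n). \]
Hence $\{y_1, \dots, y_n, y_{n+1}\}$ is $\vareps$--sparse, while $d(y_0, y_{n+1}) = t < r$; together with $d(y_0, y_i) < r$ this yields the conclusion.

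The step I expect to be the real obstacle is controlling the distances \emph{from above}: as $\pi_\xi$ only bounds distances in $X$ from below, the radius constraint $d(y_0, y_i) < r$ cannot be read off from $X_\xi$ directly and must be secured through the monotone convergence of the horoball-projection distances to the strong-asymptotic distance, for which the coherence $\proj_{L'} = \proj_{L'} \circ \proj_L$ of projections onto nested horoballs is essential. A secondary technical point is the reduction to points $x_i \in \pi_\xi(X)$: for a general point of the completion one approximates $x_i$ by $\pi_\xi(z_i)$ within $\delta$, which yields $(\vareps - 2\delta)$--sparseness; the strictness of $d(x_0, x_i) < r$ absorbs the error in the radius, and a limiting argument (or, for the application to bounded geometry, simply working with the slightly smaller constant) recovers the stated bound.
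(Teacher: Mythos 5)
Your argument is correct and is essentially the paper's: both proofs lift the configuration to $X$, project it onto a single sufficiently deep closed horoball centred at $\xi$ (sparseness of $\{y_1,\dots,y_n\}$ coming for free because $\pi_\xi$ is $1$--Lipschitz), and manufacture the extra point by moving a distance of roughly $\varepsilon$ along the geodesic ray through $y_0$ toward $\xi$. The differences are only in the bookkeeping: the paper certifies $d(y_0,y_i)<r$ by choosing near-optimal parameters in the formula $d(x_0,x_i)=\inf_{t,t'}d\bigl(\rho_0(t),\rho_i(t')\bigr)$ rather than via your monotone convergence of horoball-projection distances, and it places $y_{n+1}$ just \emph{outside} the horoball and invokes the convex-projection inequality where you go deeper inside and invoke $1$--Lipschitzness of the Busemann function (your explicit treatment of points of the completion not represented by rays is, if anything, more careful than the paper's).
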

\begin{proof}
Let $\rho_0, \rho_1, \dots, \rho_n : \R_+ \to X$ be geodesic rays which are representatives of $x_0,$ $x_1,
\dots, x_n$ respectively. Note that for all $i =0, \dots, n$ and $t \in \R_+$, we have $\pi_\xi(\rho_i(t))=x_i$.
Let $R_0 = \rho_0(\R_+)$. By definition, for each $i =1, \dots, n$ there exists $t_i \in \R_+$ such that
$d\big(\rho_i(t_i), \proj_{R_0}(\rho_i(t_i))\big)<r$. Here $\proj$ denotes the orthogonal $\CAT0$ projection map
\cite[Ch.~II, Prop.~2.4]{BH99}. Let now $H$ be a closed horoball centered at $\xi$, whose radius is sufficiently
small so that $\{\rho_i(t_i), \proj_{R_0}(\rho_i(t_i)) \; | \; i=1, \dots, n\} \cap H = \varnothing$ and that
$\rho_0(\vareps)$ does not belong to $H$ either. Set $y_i = \proj_{H}(\rho_i(0))$ for each $i = 0, \dots, n$;
this makes sense in $H$ is closed and convex. Note that $\proj_{H}(\rho_i(0)) = \proj_{H}(\rho_i(t_i))$ for all
$i>0$. Therefore, we have
$$d(y_i, y_0) \leq d\big(\rho_i(t_i), \proj_{R_0}(\rho_i(t_i))\big) <r$$
for each $i=1, \dots, n$ since $\proj_H$ does not increase distances. Note also that the set $\{y_1, \dots,
y_n\}$ is $\vareps$--sparse since $\pi_\xi$ does not increase distances and since $\{x_1, \dots, x_n\}$ is
$\vareps$--sparse.

It remains to define $y_{n+1}$. To this end, let $t_0 \in \R_+$ be the unique real such that $\rho_0(t_0)=y_0$.
We set $y_{n+1} = \rho_0(t_0 - \vareps)$. Thus $d(y_0, y_{n+1}) = \vareps$. Since $\proj_H(y_{n+1})= y_0$, we
have $d(h, y_{ n+1}) \geq \vareps$ for all $y \in H$. In particular, the set $\{y_1, \dots, y_n, y_{n+1}\}$ is
$\vareps$--sparse. Finally, since $\vareps < r$, we have $d(y_0, y_{n+1})< r$ as desired.
\end{proof}

The following proposition collects some of the basic properties of $X_\xi$:

\begin{prop}\label{prop:X_xi}
Let $\xi \in \partial_\infty X$. We have the following:
\begin{itemize}
\item[(i)] $X_\xi$ is a complete $\CAT0$ space.

\item[(ii)] There is a canonical continuous homomorphism $\varphi_\xi : \Isom(X)_\xi \to \Isom(X_\xi)$, where
$\Isom(X)$ and $\Isom(X_\xi)$ are endowed with the topology of uniform convergence on compact subsets.

\item[(iii)]  If $X$ is proper and of bounded geometry, then so is $X_\xi$.
\end{itemize}
\end{prop}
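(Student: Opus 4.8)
The plan is to treat the three parts in the order (i), (ii), (iii), with part (i) carrying the bulk of the work. For (i), completeness is built into the definition since $X_\xi$ is a metric completion, so the real content is the $\CAT0$ inequality. I would work with the down-to-earth model: a point of $X_\xi$ is a strong-asymptoty class of rays pointing toward $\xi$, the subset $\pi_\xi(X)$ is dense, and for $x = \pi_\xi(\rho)$ and $y = \pi_\xi(\rho')$ one has $d(x,y) = \lim_{t \to \infty} d(\rho(t), \rho'(t))$, a monotone (nonincreasing) limit in $t$. Since the $\CAT0$ inequality in the form of the Bruhat--Tits CN inequality is a closed condition on the distance function, it suffices to verify it on the dense set $\pi_\xi(X)$ and then invoke continuity of the metric to pass to the completion.

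To verify the CN inequality on $\pi_\xi(X)$, given three classes represented by rays $\rho, \rho', \rho''$, I would push the comparison down to the genuine $\CAT0$ space $X$: for large $t$ the midpoint $m_t$ of the segment $[\rho(t), \rho'(t)]$ satisfies the CN inequality against $\rho''(t)$ in $X$, and since $\pi_\xi$ is $1$--Lipschitz and the distances involved converge to their limiting values in $X_\xi$, the point $\pi_\xi(m_t)$ is an approximate midpoint of $x$ and $y$. Letting $t \to \infty$ and using completeness, one shows that the $\pi_\xi(m_t)$ converge to a genuine midpoint of $x$ and $y$, and that the CN inequality survives the passage to the limit; the Bruhat--Tits characterization then gives that $X_\xi$ is geodesic and $\CAT0$. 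The main obstacle is exactly this step: the orthogonal projections onto the horoballs defining the projective system are only $1$--Lipschitz and need not carry midpoints to midpoints, so one cannot directly assert that the projective limit is geodesic. Making the approximate-midpoint argument rigorous — promoting approximate midpoints to genuine ones using completeness together with the CN inequality available in the ambient space $X$ — is the crux of the proof.

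For (ii), an isometry $g \in \Isom(X)_\xi$ permutes the rays pointing toward $\xi$ and preserves the strong-asymptoty relation, hence descends to an isometry of the quotient and extends uniquely to the completion $X_\xi$; this defines $\varphi_\xi(g)$, yields a homomorphism, and satisfies the equivariance relation $\pi_\xi \circ g = \varphi_\xi(g) \circ \pi_\xi$. For continuity with respect to the compact-open topologies, I would exploit that every $\varphi_\xi(g)$ is $1$--Lipschitz, so that any sequence $\varphi_\xi(g_n)$ is equicontinuous; it then suffices to establish pointwise convergence on the dense set $\pi_\xi(X)$. Via the equivariance relation and the continuity of $\pi_\xi$, this reduces to the convergence $g_n.p \to g.p$ in $X$, which holds by hypothesis, and equicontinuity upgrades pointwise convergence to uniform convergence on compact subsets of $X_\xi$.

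Part (iii) is then immediate from the preparatory lemmas. Bounded geometry of $X_\xi$ follows from Lemma~\ref{lem:eps-sparse}: any $\vareps$--sparse subset of $n$ points inside a ball of radius $r$ in $X_\xi$ produces an $\vareps$--sparse subset of $n+1$ points inside a ball of radius $r$ in $X$, whence $n_{r,\vareps}(X_\xi) \leq n_{r,\vareps}(X) - 1 < \infty$ for all $r > \vareps > 0$. Since $X_\xi$ is complete by (i) and of bounded geometry, it is proper by Lemma~\ref{lem:BoundedGeom}(i), which completes the plan.
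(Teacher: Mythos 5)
Your proposal is correct, and on parts (ii) and (iii) it coincides with the paper's own proof. Your equicontinuity argument for the continuity of $\varphi_\xi$ (pointwise convergence on the dense image $\pi_\xi(X)$, upgraded using the fact that isometries are $1$-Lipschitz) is the same argument the paper phrases as an $\frac{\vareps}{3}$ contradiction: a finite $\frac{\vareps}{3}$-dense subset $D$ of the compact set is lifted to a finite $D' \subset X$, the estimate is pushed through the $1$-Lipschitz map $\pi_\xi$ and spread over the compact set by density. Your part (iii) --- bounded geometry from Lemma~\ref{lem:eps-sparse} via $n_{r,\vareps}(X_\xi) \leq n_{r,\vareps}(X) - 1$, then properness from Lemma~\ref{lem:BoundedGeom}(i) --- is exactly the paper's proof, with the inequality made explicit.

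The one place you genuinely diverge is (i), where the paper gives no argument: it asserts that the claim ``follows immediately from the definition in terms of horoballs'' and cites Leeb for an argument via the rays model. What you propose is precisely that cited alternative, carried out: verify the comparison inequality on the dense set $\pi_\xi(X)$ by pushing triangles down from $X$, obtain approximate midpoints as images of genuine midpoints of $X$, and promote them to genuine midpoints using completeness together with the closedness (under pointwise limits of distances) of the four-point form of the $\CAT0$ condition. You correctly identify the crux that the paper's ``immediately'' glosses over: the projections in the projective system are only $1$-Lipschitz and do not carry midpoints to midpoints, so geodesicity of the completed limit is the nontrivial point; your version buys self-containedness at the cost of length. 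One small repair is needed: the formula $d(x,y) = \lim_{t \to \infty} d(\rho(t), \rho'(t))$ is false for arbitrary representatives --- in the Euclidean plane, if $\rho'(t) = \rho(t+c)$ is a forward translate of $\rho$, the infimum over $t, t'$ is $0$ while your limit equals $c$. The formula holds only after the rays are reparametrized so that their Busemann levels match; equivalently, one should use the monotone limit $\lim_{t \to \infty} \inf_{t'} d(\rho(t), \rho'(t'))$. With that normalization imposed throughout (it costs nothing, since every class has such a representative), your limiting arguments in (i) go through.
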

\begin{proof}
(i). Follows immediately from the definition in terms of horoballs. For another argument using the alternative
construction of $X_\xi$, see B.~Leeb \cite[Proposition~2.8]{Leeb}.

\medskip \noindent
(ii). The map $\varphi_{\xi}$ is defined by:
$$\varphi_\xi(g).\pi_\xi(x) = \pi_\xi(g.x). $$
It is immediate from the definition that it is a homomorphism. Assume in order to obtain a contradiction that
$\varphi_\xi$ is not continuous. Then it is not continuous at $1$. Thus there exists a compact subset $C \subset
X_\xi$, a real $\vareps > 0$, a sequence $(y_n)_{n \geq 0}$ of points of $C$ and a sequence $(g_n)_{n \geq 0}$
of elements of $\Isom(X)_\xi$ such that $\lim_{n \to \infty} g_n = 1$ and $d(\varphi_\xi(g_n).y_n, y_n) >
\vareps$ for each $n$. Let $D \subset C$ be a finite subset which is $\frac{\vareps}{3}$--dense in $C$. Let $D'
\subset X$ be a finite subset such that $\pi_\xi(D') = D$. Since $\lim_{n \to \infty} g_n = 1$ and since $D'$ is
finite, we have $d(g_n.x, x) \leq \frac{\vareps}{3}$ for all $x \in D'$ and all sufficiently large $n$. Since
$\pi_\xi$ does not increase distances, we deduce from the definition of $\varphi_\xi$ that
$d(\varphi_\xi(g_n).y, y) \leq \frac{\vareps}{3}$ for all $y \in D$ and all  sufficiently large $n$. Since $D$
is $\frac{\vareps}{3}$--dense in $C$, it finally follows that $d(\varphi_\xi(g_n).z, z) \leq \vareps$ for all $z
\in C$ and all  sufficiently large $n$. This is a contradiction.

Note that $\varphi_\xi$ need not be proper.

\medskip \noindent
(iii). By definition, the space $X_\xi$ is complete. In view of Lemma~\ref{lem:BoundedGeom}(i), it is proper
whenever it is of bounded geometry. The fact that it is of bounded geometry follows easily from
Lemma~\ref{lem:eps-sparse}.
\end{proof}

Important to us will be the fact that the length of a sequence $(\xi_1, \xi_2, \dots, \xi_k)$ such that $\xi_1
\in \bd X$ and $\xi_{i+1} \in \bd X_{\xi_1, \dots, \xi_i}$ for each $i=1, \dots, k-1$ may not be arbitrarily
large under suitable assumptions on $X$:

\begin{cor}\label{cor:Bound_GeomDim}
Let $X$ be a complete $\CAT0$ space of bounded geometry. Then there exists an integer $K \geq 0$ depending only
on $X$ such that, given any sequence $(\xi_1, \xi_2, \dots, \xi_k)$ with $\xi_1 \in \bd X$ and $\xi_{i+1} \in
\bd X_{\xi_1, \dots, \xi_i}$ for each $i=1, \dots, k-1$, the space $X_{\xi_1, \dots, \xi_k}$ is bounded whenever
$k = K$. In particular $\bd X_{\xi_1, \dots, \xi_k}$ is empty whenever $k = K$.
\end{cor}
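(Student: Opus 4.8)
The plan is to read Lemma~\ref{lem:eps-sparse} as the assertion that the \emph{packing function} $n_{r,\vareps}$ strictly drops when one passes from a $\CAT0$ space $Y$ to an asymptotic space $Y_\eta$, and then to exploit the finiteness of this function (guaranteed by bounded geometry) to cap the number of admissible iterations.

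First I would fix once and for all two reals $r > \vareps > 0$ and set $N := n_{r,\vareps}(X)$; since $X$ is of bounded geometry, $N < \infty$. The key observation is that for any complete $\CAT0$ space $Y$ and any $\eta \in \bd Y$,
\[
n_{r,\vareps}(Y_\eta) \leq n_{r,\vareps}(Y) - 1 .
\]
Indeed, I would choose a ball $B(x_0, r) \subset Y_\eta$ together with an $\vareps$--sparse subset $\{x_1, \dots, x_m\}$ of it realizing $m = n_{r,\vareps}(Y_\eta)$; Lemma~\ref{lem:eps-sparse} (applicable because $\vareps < r$) then produces an $\vareps$--sparse set of cardinality $m+1$ inside a ball of radius $r$ in $Y$, whence $n_{r,\vareps}(Y) \geq m+1$.

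Next I would iterate this inequality along the given sequence. Writing $X_0 = X$ and $X_j = X_{\xi_1, \dots, \xi_j}$, each $X_j$ is a complete $\CAT0$ space by Proposition~\ref{prop:X_xi}(i), and $\xi_{j+1} \in \bd X_j$ for $j = 0, \dots, k-1$, so the displayed inequality applies at every step and yields
\[
n_{r,\vareps}(X_j) \leq N - j \qquad (0 \leq j \leq k).
\]
I would then set $K := N - 1$, a constant depending only on $X$ (through the choice of $r, \vareps$); note $K \geq 0$ since $N \geq 1$ for $X \neq \varnothing$. For $k = K$ this gives $n_{r,\vareps}(X_K) \leq 1$.

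It remains to deduce boundedness from $n_{r,\vareps}(X_K) \leq 1$, which I would do using geodesic convexity: if $X_K$ contained points $p, q$ with $d(p,q) \geq \vareps$, then the geodesic $[p,q]$ would carry a point $m$ with $d(p,m) = \vareps$, so $\{p, m\}$ would be an $\vareps$--sparse pair inside $B(p, r)$ (again using $\vareps < r$), contradicting $n_{r,\vareps}(X_K) \leq 1$. Hence $\diam X_K \leq \vareps$, so $X_K$ is bounded, and in particular it carries no geodesic ray, i.e. $\bd X_K = \varnothing$. The one point requiring genuine care is the correct reading of Lemma~\ref{lem:eps-sparse} as a strict-decrease statement for $n_{r,\vareps}$; everything else is bookkeeping. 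I would also stress that the bounded geometry of $X$ enters only to ensure $N < \infty$, whereas it is the completeness and $\CAT0$ property of the intermediate spaces $X_j$, furnished by Proposition~\ref{prop:X_xi}(i), that keeps the construction and the lemma applicable at each stage.
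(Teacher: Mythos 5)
Your proof is correct and takes essentially the same route as the paper: both apply Lemma~\ref{lem:eps-sparse} inductively along the tower of spaces $X_{\xi_1,\dots,\xi_j}$ and use the bounded geometry of $X$ to cap the number of admissible iterations. The only cosmetic difference is that you package the per-step gain as a strict decrease of the packing function $n_{r,\vareps}$ (sparseness $\vareps$ inside balls of radius $r$) and then deduce boundedness from $n_{r,\vareps}(X_K)\leq 1$, whereas the paper argues by contradiction, growing an $r$-sparse set of cardinality $k+2$ inside a ball of radius $r+\vareps$ in $X$ from a diameter-$r$ pair in $X_{\xi_1,\dots,\xi_k}$.
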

\begin{proof}
Suppose that $X_{\xi_1, \dots, \xi_k}$ is of diameter $>r$. Then $X_{\xi_1, \dots, \xi_k}$ contains two
points at distance $r$ from one another. Applying Lemma~\ref{lem:eps-sparse} inductively, we construct a finite
subset $N \subset X$ of cardinality $k+2$ which is $r$-sparse and of diameter $\leq r + \vareps$, where $\vareps
>0$ is a fixed positive number (which may be chosen arbitrarily small). In particular, we obtain
$k+2 \leq n_{r+\vareps, r}(X)$. The desired result follows.
\end{proof}

\begin{remark}
Using results of B.~Kleiner \cite{Kl99}, it can be shown that if $X$ is complete and $\GeomDim(X_\xi) \geq n$,
then $\GeomDim(X) \geq n+1$. In particular, if $\GeomDim(X) $ is finite, then $\GeomDim(X_\xi) < \GeomDim(X)$.
Therefore, if $X$ is complete and $\GeomDim(X) $ is finite, then there exists a constant $K$ such that $\bd
X_{\xi_1, \dots, \xi_k}$ is empty whenever $k \geq K$. Note that a $\CAT0$ space $X$ such that $\bd X$ is empty
might be unbounded: for example take $X$ to be a metric graph which is a star with infinitely many branches of
finite length, such that the supremum of the length of the branches is infinite. Note also that the fact that
$X$ is of finite geometric dimension is unrelated to the local compactness of $X$. In particular, if $X$ is a
$\CAT0$ piecewise Euclidean cell complex with finitely many types of cells (such as a building \cite{Da98} or a
finite dimensional cube complex), then $\GeomDim(X)$ is finite but $X$ need not be locally compact.
\end{remark}

\medskip
We define the \textbf{refined visual boundary} $\bdfine X$ to be the set of all sequences
$$(\xi_1, \xi_2, \dots, \xi_k, x)$$
such that $\xi_1 \in \bd X$, $\xi_{i+1} \in \bd X_{\xi_1, \dots, \xi_i}$ for each $i=1, \dots, k-1$ and $x \in
X_{\xi_1, \dots, \xi_k}$. Given such a sequence $(\xi_1, \xi_2, \dots, \xi_k, x)$ in the refined boundary, we
define its \textbf{level} to be the number $k$. In order to associate a level to each point of $X \cup \bdfine
X$, we take the convention that points of $X$ are of level~$0$. Corollary~\ref{cor:Bound_GeomDim} gives
sufficient conditions on $X$ for the existence of an upper bound on the level of all points in $X \cup \bdfine
X$.

\subsection{Structure of the stabilizer of a point in the refined boundary}

Given a point $\xi \in \partial_\infty X$ and a base point $x \in X$, we let $b_{\xi, x} : X \to \R$ be the
Busemann function centered at $\xi$ such that $b_{\xi, x}(x)=0$. Recall that Busemann functions satisfy the
following cocycle identity for all $x, y, z \in X$:
$$b_{\xi, x}(y) - b_{\xi, x}(z) = b_{\xi, z}(y).$$
It follows that the mapping
$$\beta_\xi : \Isom(X)_\xi \to \R : g \mapsto b_{\xi, x}(g.x)$$
is independent of the point $x \in X$ and is a group homomorphism. It is called the \textbf{Busemann
homomorphism} centered at $\xi$.

\begin{prop}\label{prop:X_xi:smooth}
Let $X$ be a proper $\CAT0$ space and $G$ be a totally disconnected group acting continuously, properly and
cocompactly on $X$. Given $\xi \in \partial_\infty X$, we have the following:
\begin{itemize}
\item[(i)] Given any $x \in X_\xi$, the LF-radical $\LF(G_{\xi, x})$ is open in $G_{\xi, x}$; it coincides with
the kernel of $\beta_\xi : G_{\xi, x} \to \R$.


\item[(ii)] Let $K_\xi$ be the kernel of the restriction of $\varphi_\xi$ to $G_\xi$. Then $\LF(K_\xi)$ is open
in $K_\xi$; it coincides with the kernel of $\beta_\xi : K_{\xi} \to \R$. In particular, the group
$K_{\xi}/\LF(K_{\xi})$ is isomorphic to a subgroup of $\R$.

\item[(iii)] Let $(\xi_1, \xi_2, \dots, \xi_n, x) \in \bdfine X$ be a  point of level $n$ in the refined visual
boundary. Set $H = G_{ \xi_1, \dots, \xi_n, x}$. Then $\LF(H)$ is open in $H$, it contains all elements of $H$
which act as elliptic isometries on $X$ and, furthermore, $H /\LF(H)$ is abelian and torsion free. In particular
$H$ is amenable.
\end{itemize}
\end{prop}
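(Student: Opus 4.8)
The plan is to prove the three parts in order, since each builds on the previous one, and to reduce everything to the kernel of the Busemann homomorphism together with Platonov's extension theorem (Proposition~\ref{prop:Platonov}) and the characterization of \TLF{} groups from Section~\ref{sect:topoLF}. Throughout I would exploit the smoothness of the $G$--action, which holds here because the action is proper and cocompact (so stabilizers of points are compact open, and by Corollary~\ref{cor:parabolic} the action is semisimple).

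\medskip
\textbf{Part (i).} Fix $x \in X_\xi$ and set $N = \Ker(\beta_\xi : G_{\xi,x} \to \R)$. Since $G_{\xi,x}$ is smooth one checks that $G_{\xi,x}$ is open in $G$, hence locally compact and totally disconnected, so it possesses compact open subgroups. I would first show $N$ is open: the coset $N = \beta_\xi\inv(0)$ is open because $\beta_\xi$ is continuous and $\R$ is discrete on the image (the translation lengths are discrete at $0$ by Corollary~\ref{cor:parabolic}, and $|\beta_\xi(g)| \leq |g|$ forces the image to be discrete). Next I would prove $N$ is \TLF{} via Lemma~\ref{lem:TLF:compact}: given any finite subset of $N$, each generator $g$ has $\beta_\xi(g)=0$, so $g$ fixes $\pi_\xi(y)$ for representatives, and by a strong-asymptoticity/displacement argument (in the spirit of Lemma~\ref{lem:smooth}(i)) one shows $\langle g_1,\dots,g_k\rangle$ has bounded orbits on $X$ and hence lies in a compact subgroup. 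This gives $N \subset \LF(G_{\xi,x})$. Conversely any \TLF{} normal subgroup maps under $\beta_\xi$ to a \TLF{}, hence bounded, hence trivial, subgroup of $\R$; so $\LF(G_{\xi,x}) \subset N$. Thus $\LF(G_{\xi,x}) = N$ is open.

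\medskip
\textbf{Part (ii).} Here $K_\xi = \Ker(\varphi_\xi|_{G_\xi})$ consists of isometries acting trivially on $X_\xi$. The inclusion $K_\xi \hookrightarrow \bigcap_{x} G_{\xi,x}$ lets me transport Part~(i): an element of $K_\xi$ lies in $\Ker\beta_\xi$ iff it is \TLF{}, by the same displacement argument applied to all points simultaneously. The quotient $K_\xi/\Ker\beta_\xi$ injects into $\R$ via $\beta_\xi$, proving the final claim; openness of $\LF(K_\xi)=\Ker(\beta_\xi|_{K_\xi})$ follows as in Part~(i).

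\medskip
\textbf{Part (iii).} This is the main obstacle, and the strategy is induction on the level $n$, using the equivariant projection $\Isom(X)_{\xi_1} \to \Isom(X_{\xi_1})$ of Proposition~\ref{prop:X_xi}(ii) to pass from $X$ to the lower-dimensional iterated space $X_{\xi_1}$. Writing $H = G_{\xi_1,\dots,\xi_n,x}$, its image $\varphi_{\xi_1}(H)$ stabilizes the level-$(n-1)$ point $(\xi_2,\dots,\xi_n,x)$ in $\bdfine X_{\xi_1}$, while its kernel is contained in $K_{\xi_1}$. By Part~(ii) the kernel part contributes a \TLF{}-by-($\subset\R$) piece, and by the inductive hypothesis the image is \TLF{}-by-(abelian torsion-free); Platonov's Proposition~\ref{prop:Platonov} then assembles $\LF(H)$ as an extension of \TLF{} groups, so $\LF(H)$ is \TLF{}, and the successive Busemann homomorphisms exhibit $H/\LF(H)$ as a subgroup of $\R^n$, whence abelian and torsion free. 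The base case $n=0$ reduces to the statement that $G_x$ is compact (so $H=G_x=\LF(H)$). The delicate points are verifying that $\LF(H)$ is genuinely open in $H$ at each inductive step --- which I would extract from the openness statements in Parts~(i) and~(ii) combined with openness of point stabilizers --- and that the elliptic elements all land in $\LF(H)$, which follows because an elliptic isometry has translation length $0$ and hence lies in every kernel $\Ker\beta_{\xi_i}$. Finally, amenability of $H$ is immediate from Corollary~\ref{cor:TLF=>amenable} (so $\LF(H)$ is amenable) together with the fact that $H/\LF(H)$ is abelian, using the closure of amenability under extensions.
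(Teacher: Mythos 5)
Your parts (i) and (ii) follow the paper's route in outline: one shows that $\Ker(\beta_\xi|_{G_{\xi,x}})$ is \TLF{} by a displacement argument along a ray representing $x$ (strong asymptoticity gives displacements tending to $0$, then cocompactness and Lemma~\ref{lem:smooth}(i) place any finite subset inside a common compact stabilizer), the reverse inclusion $\LF(G_{\xi,x})\subset\Ker\beta_\xi$ being easy; and (ii) is deduced from (i) via $K_\xi=\bigcap_{x\in X_\xi}G_{\xi,x}$. Two local flaws, though: the claim that $G_{\xi,x}$ is open in $G$ is false (it is merely closed, which suffices, since a closed subgroup of $G$ is locally compact and totally disconnected and hence has compact open subgroups); and your openness argument for $\Ker\beta_\xi$ is backwards --- the inequality $|\beta_\xi(g)|\leq |g|$ together with discreteness of translation lengths at $0$ does \emph{not} make the image of $\beta_\xi$ discrete, because a small Busemann value does not force a small translation length. (For $g\in G_{\xi,x}$ one does have $|g|=|\beta_\xi(g)|$ by strong asymptoticity, which would repair this, but the paper's argument is simpler and avoids it entirely: any compact open subgroup of $G_{\xi,x}$ fixes a point of $X$, hence lies in $\Ker\beta_\xi$, so the kernel contains an open subgroup.)

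The genuine gap is in part (iii). Your induction on the level would apply the proposition, as inductive hypothesis, to the image $\varphi_{\xi_1}(H)$ acting on $X_{\xi_1}$. But the hypotheses of the proposition --- a totally disconnected group acting continuously, \emph{properly} and \emph{cocompactly} --- are not available there: the paper explicitly notes that $\varphi_{\xi_1}$ need not be proper, and there is no reason for $\varphi_{\xi_1}(G_{\xi_1})$, or its closure, to act properly or cocompactly on $X_{\xi_1}$. There is also a topological obstruction to the Platonov assembly: $\varphi_{\xi_1}(H)$ carries the quotient topology, which need not agree with the topology induced from $\Isom(X_{\xi_1})$, so knowing that the image is \TLF{} inside $\Isom(X_{\xi_1})$ does not yield the statement about the quotient group of $H$ that Proposition~\ref{prop:Platonov} requires. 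This is precisely why the paper does not induct on the level. Instead, given $g_1,\dots,g_k\in K=H\cap\bigcap_{i=1}^n\Ker\beta_{\xi_i}$, it transports approximate fixed points \emph{downwards}: starting from a sequence in $X_{\xi_1,\dots,\xi_{n-1}}$ on which the displacements of the $\varphi_{n-1}(g_i)$ tend to $0$, it lifts these to points far out along rays toward $\xi_{n-1}$ in $X_{\xi_1,\dots,\xi_{n-2}}$ (choosing the parameters $t_m$ large enough that the displacements still tend to $0$), and so on, until it produces a sequence $(x_m)$ in $X$ itself with $d(g_i.x_m,x_m)\to 0$ for all $i$. Only then does it invoke cocompactness and Lemma~\ref{lem:smooth}(i) --- in $X$, where the action genuinely is proper and cocompact --- to place $g_1,\dots,g_k$ in a common compact subgroup, proving $K$ \TLF{} directly; the quotient $H/K$ then embeds in $\R^n$ via $\beta_{\xi_1}\times\dots\times\beta_{\xi_n}$. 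Your plan has no substitute for this lifting step, and without it the induction cannot close.
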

\begin{proof}
Note that (i) is a special case of (iii). However, the proof of (iii) involves some technicalities which can be
avoided in the situation of (i). Therefore, in order to make the argument  more transparent, we prove (i)
separately.

\medskip \noindent
(i). Let $K_{\xi, x}$ denote the kernel of the restriction to $G_{\xi, x}$ of the Busemann homomorphism
$\beta_\xi$. Let $y \in X$ be such that $\pi_\xi(y)=x$ and $\rho = \rho_{\xi, y} : \R_+ \to X$ be the geodesic
ray pointing towards $\xi$ with origin $y$. Define $x_n = \rho(n)$ for each $n \in \N$. Since $G$ is cocompact,
there exists a sequence $(\gamma_n)_{n \geq 0}$ of elements of $G$ such that $(\gamma_n.x_n)_{n \geq 0}$
converges to some $c \in X$. Now, given any $g_1, \dots, g_k \in K_{\xi, x}$, we have $\lim_{n \to \infty}
d(x_n, g_i.x_n) = 0$ for each $i=1, \dots, k$. Therefore, applying Lemma~\ref{lem:smooth} inductively, we deduce
that there exists $n \in \N$ such that $g_i \in G_{\gamma_n\inv.c}$ for each $i =1, \dots, k$. In particular,
the set $\{g_1, \dots, g_k\}$ is contained in a compact subgroup of $G$. This shows that $K_{\xi, x}$ is \TLF.

Now, the inclusion $K_{\xi, x} \subset \LF(G_{\xi, x})$ is obvious. Conversely, given any element $g \in G_{\xi,
x}$ which does not belong to $K_{\xi, x}$, then $g$ is not elliptic, hence it is not contained in $\LF(G_{\xi,
x})$. Thus $ K_{\xi, x} = \LF(G_{\xi, x})$ as desired.

The fact that $K_{\xi, x}$ is open in $G_{\xi, x}$ is clear: by definition $G_{\xi, x}$ is closed and any
compact open subgroup of $G_{\xi, x}$ fixes a point in $X$, and is thus contained in $K_{\xi, x}$.

\medskip \noindent
(ii). By definition, we have $K_\xi= \bigcap_{x \in X_\xi} G_{\xi, x}$. Hence the desired assertion follows
from~(i).

\medskip \noindent
(iii). For each $i= 1, \dots, n$, let $\beta_{\xi_i} : G_{\xi_1, \dots, \xi_i} \to \R$ be the restriction of the
Busemann homomorphism centered at $\xi_i$. In particular, restricting further, one obtains a homomorphism
$\beta_{\xi_i} : H \to \R$. The direct product of these homomorphisms defines a homomorphism
$$\beta =
\beta_{\xi_1} \times \dots \times \beta_{\xi_n} : H \to \R^{n},$$ whose kernel is the subgroup $K =
\bigcap_{i=1}^n \Ker \beta_{\xi_i}$. Clearly $K$ contains all elements of $H$ which act as elliptic isometries
on $X$ (and hence on $X_{\xi_1, \dots, \xi_i}$ for each $i=1, \dots, n$). In particular it follows that $K$ is
open in $H$.

Our aim is to show that $K = \LF(H)$. We have just seen that $K$ contains all periodic elements of $H$. Thus the
inclusion $\LF(H) \subset K$ is clear. It remains to show that $K$ is \TLF.

For each $i = 1, \dots, n$, we define
$$\varphi_i = \varphi_{\xi_i} \circ \dots \circ \varphi_{\xi_1} \circ \varphi_{\xi_1} :
\Isom(X)_{\xi_1, \dots, \xi_{i}} \to \Isom(X_{\xi_1, \dots, \xi_{i}}).$$
Let $g_1, \dots, g_k$ be elements of $K$. By definition, there exists a sequence $(x_{n-1, m})_{m \geq 0}$ of
points of $X_{\xi_1, \dots, \xi_{n-1}}$ such that
$$\lim_{m \to \infty} d(\varphi_{n-1}(g_i).x_{n-1, m}, x_{n-1, m}) = 0$$
 for each $i=1, \dots, k$. Let now $\rho_{n-2,m} : \R_+ \to X_{\xi_1, \dots, \xi_{n-2}}$ be a geodesic ray
pointing towards $\xi_{n-1}$ such that $\pi_{\xi_{n-1}}(\rho_{n-2,m}(t))=x_{n-1, m}$ for each $t \in \R_+$.

For each $m$, we may choose a sufficiently large  $t_m \in \R_+$ in such a way that the sequence $(x_{n-2,m})_{m
\geq 0}$ defined by $x_{n-2, m} = \rho_{n-2,m}(t_m) \in X_{\xi_1, \dots, \xi_{n- 2}}$ satisfies the identity
$$\lim_{m \to \infty} d(\varphi_{n-2}(g_i).x_{n-2, m}, x_{n-2, m}) = 0$$
for each $i=1, \dots, k$.

Proceeding inductively, we construct in this way a sequence  $(x_{j, m})_{m \geq 0}$ of points of $X_{\xi_0,
\dots, \xi_j}$ such that
$$\lim_{m \to \infty} d(\varphi_{j}(g_i).x_{j, m}, x_{j, m}) = 0$$
for each $i=1, \dots, k$ and each $j = 1, \dots, n-1$. In a final further step, we then construct a sequence
$(x_m)_{m \geq 0}$ of points of $X$ such that
$$\lim_{m \to \infty} d(g_i.x_{m}, x_{m}) = 0$$
for each $i=1, \dots, k$. Now, it follows by the same arguments as in the proof of (i) that $\{g_1, \dots,
g_k\}$ is contained in a compact subgroup of $G$. Hence $K$ is \TLF{}, as desired.

The amenability of $H$ is now immediate from Corollary~\ref{cor:TLF=>amenable}.
\end{proof}

Note that the proof of Proposition~\ref{prop:X_xi:smooth}(iii) shows that $\LF(H)$ coincides with $\Ker
\beta|_H$, where $\beta = \beta_{\xi_1} \times \dots \times \beta_{\xi_n} : G_{\xi_1, \dots, \xi_n} \to \R^n$ is
the direct product of the Busemann homomorphisms centered at $\xi_i$ for $i= 1, \dots, n$.

\begin{lem}\label{lem:semisimple:inherited}
Let $X$ be a proper $\CAT0$ space and $G$ be a totally disconnected group acting continuously, properly and
cocompactly on $X$. Then, given any element $\gamma \in  \Ker \beta$, the respective translation lengths of
$\gamma$ in $X$ and in $X_{\xi_1, \dots, \xi_n}$ coincide. Furthermore, if the $G$--action is smooth, then the
action of $G_{\xi_1, \dots, \xi_n}$ upon $X_{\xi_1, \dots, \xi_n}$ is by semisimple isometries.
\end{lem}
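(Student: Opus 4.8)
The statement has two parts, and I would reduce both to the behaviour of a single projection $\pi_\xi : X \to X_\xi$, then iterate along the tower $X \to X_{\xi_1} \to \dots \to X_{\xi_1,\dots,\xi_n}$; this is legitimate because each step $\pi_{\xi_i}$ is $1$--Lipschitz and intertwines $g$ with $\varphi_{\xi_i}(g)$. For the first part, write $\bar\gamma = \varphi_\xi(\gamma)$ for one step, where $\gamma$ fixes $\xi$ and $\beta_\xi(\gamma)=0$. The inequality $|\bar\gamma| \le |\gamma|$ is the easy half: since $\pi_\xi$ does not increase distances, satisfies $\bar\gamma \circ \pi_\xi = \pi_\xi \circ \gamma$, and has dense image, one gets $d(\pi_\xi x, \bar\gamma \pi_\xi x) = d(\pi_\xi x, \pi_\xi \gamma x) \le d(x,\gamma x)$ and infimizes. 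For the reverse inequality I would use the description of $X_\xi$ as the projective limit of the horoballs $H_r$ centred at $\xi$ with their orthogonal projections, which gives
$$ d(\pi_\xi x, \bar\gamma \pi_\xi x) = \lim_{r \to \infty} d\big(\proj_{H_r} x, \proj_{H_r} \gamma x\big). $$
The hypothesis $\beta_\xi(\gamma)=0$ says exactly that $\gamma$ preserves each $H_r$; being an isometry it then commutes with $\proj_{H_r}$, so the right-hand side is $\lim_r d(\proj_{H_r} x, \gamma \proj_{H_r} x) = \lim_r d_\gamma(\proj_{H_r} x) \ge |\gamma|$. Infimizing over $x$ yields $|\bar\gamma| \ge |\gamma|$, hence equality. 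Iterating over the $n$ levels (at level $i$ the vanishing of $\beta_{\xi_i}(\gamma)$, valid for $\gamma \in \Ker\beta$, ensures the induced isometry preserves the horoballs at $\xi_i$) gives the equality of translation lengths in $X$ and in $X_{\xi_1,\dots,\xi_n}$.

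For the second part, Corollary~\ref{cor:parabolic} shows that smoothness and cocompactness make the $G$--action on $X$ semisimple, so each $g \in G_{\xi_1,\dots,\xi_n}$ is elliptic or hyperbolic on $X$. It then suffices to prove the single-step statement: if $g$ is semisimple on $X$ and fixes $\xi$, then $\varphi_\xi(g)$ is semisimple on $X_\xi$; the tower is handled by induction. If $g$ is elliptic it fixes some $p \in X$, so $\varphi_\xi(g)$ fixes $\pi_\xi(p)$ and is elliptic. If $g$ is hyperbolic with axis $\ell$ and translation length $L$, then since $b_\xi$ is convex and $1$--Lipschitz and satisfies $b_\xi(g\,\cdot) = b_\xi + \beta_\xi(g)$, its restriction $b_\xi|_\ell$ is affine of slope $t/L$ with $t = \beta_\xi(g) \in [-L,L]$. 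When $|t| = L$ the axis is itself a ray pointing to $\xi$, so $\pi_\xi(\ell)$ collapses to a single point fixed by $\varphi_\xi(g)$, which is therefore elliptic.

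The remaining case $|t| < L$ is where $g$ fixes a boundary point $\xi$ transverse to its own axis — a genuinely higher-rank phenomenon — and I expect this to be the crux. The plan is to show that an axis $\ell$ of $g$, together with the rays to $\xi$ issuing from its points, spans a flat: because $b_\xi$ is affine along $\ell$ and $g$ translates $\ell$ while fixing $\xi$, a flat-strip / parallel-rays argument should produce a two-dimensional flat $F \supset \ell$ in which the directions toward $\xi$ are constant. On $F$ the projection $\pi_\xi$ restricts to the Euclidean orthogonal projection killing the $\xi$--direction, so $\pi_\xi(\ell)$ is a genuine geodesic line and an axis for $\varphi_\xi(g)$ of positive translation length $\sqrt{L^2 - t^2}$, whence $\varphi_\xi(g)$ is hyperbolic. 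The hard part is establishing that the convex hull of $\ell$ and $\xi$ is flat in an \emph{arbitrary} proper $\CAT0$ space rather than assuming it (as one may in the building or symmetric-space setting); every other step is a routine consequence of the $1$--Lipschitz equivariance of the projections together with Corollary~\ref{cor:parabolic}.
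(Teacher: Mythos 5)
Your proposal follows the same route as the paper's proof, and the parts you actually carry out are correct. For the translation lengths, the paper dismisses the inequality $|\gamma| \le |\varphi_\xi(\gamma)|$ for $\gamma \in \Ker\beta$ with ``it is easy to see''; your argument --- $\beta_\xi(\gamma)=0$ forces $\gamma$ to preserve every horoball $H_r$ centred at $\xi$, hence to commute with $\proj_{H_r}$, so that $d\bigl(\proj_{H_r}x,\proj_{H_r}\gamma x\bigr) = d_\gamma\bigl(\proj_{H_r}x\bigr) \ge |\gamma|$ --- is a legitimate way of filling this in. One small misplacement: the density of $\pi_\xi(X)$ in $X_\xi$ (plus continuity of displacement functions) is needed in this \emph{hard} half, since the infimum defining $|\varphi_\xi(\gamma)|$ runs over all of $X_\xi$, not in the easy half where you invoke it. Your reduction of semisimplicity to a one-step statement plus induction along the tower, with Corollary~\ref{cor:parabolic} as base case, and your trichotomy (elliptic; hyperbolic with $\xi \in \bd\ell$; hyperbolic with $\xi \notin \bd\ell$) are exactly the paper's; so is your Euclidean endgame, and you are right that only invariance of $\ell$, not of the half-plane, is needed, since $\varphi_\xi(g)\pi_\xi(\ell) = \pi_\xi(g\ell) = \pi_\xi(\ell)$.

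The genuine gap is the step you flagged yourself: you never establish that in the transverse case ($|t|<L$, equivalently $\xi \notin \bd\ell$) the axis $\ell$ and $\xi$ bound a flat half-plane, and phrases like ``should produce'' and ``I expect this to be the crux'' leave the proof incomplete at precisely its crucial point. However, closing it requires a citation, not a new idea: this is exactly where the paper invokes \cite[Ch.~II, Prop.~9.8 and Cor.~9.9]{BH99}, which hold in an \emph{arbitrary} complete $\CAT0$ space --- no building or symmetric-space hypothesis. The bridge from your formulation to theirs is that the affineness of $b_\xi$ along $\ell$ with slope of modulus $<1$ forces the two Tits angles $\angle_T(\xi,\ell(+\infty))$ and $\angle_T(\xi,\ell(-\infty))$ to sum to $\pi$, which is the hypothesis under which the cited results produce a Euclidean half-plane $H$ bounded by $\ell$ with $\xi \in \bd H$. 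With that reference in hand, your computation that $\pi_\xi(H)=\pi_\xi(\ell)$ is a geodesic line of $X_\xi$ on which $\varphi_\xi(g)$ translates by $\sqrt{L^2-t^2}>0$ completes the argument exactly as in the paper.
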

\begin{proof}
Let $\xi \in \bd X$. Since $\pi_\xi$ does not increase distances, it is clear that the translation length
$|\gamma|$ of any element $\gamma \in \Isom(X)_\xi$ is bounded below by the translation length
$|\varphi_\xi(\gamma)|$  of $\varphi_\xi(\gamma)$ in $X_\xi$. Conversely, if $\gamma \in \Ker \beta$, then it is
easy to see that $|\gamma| \leq |\varphi_\xi(\gamma)|$.

It is clear that an elliptic isometry $\gamma \in \Isom(X)$ which fixes $\xi$ acts as an elliptic isometry upon
$X_\xi$. Suppose now that $\gamma \in \Isom(X)$ is hyperbolic and fixes $\xi$. Let $\lambda$ be an axis of
$\gamma$. If $\xi \in \bd \lambda$, then $\gamma$ is elliptic on $X_\xi$. Otherwise, it follows easily from
\cite[Ch.~II, Prop.~9.8 and Cor.~9.9]{BH99} that $\lambda$ bounds a Euclidean half-plane $H$ such that $\xi \in
\bd H$. Moreover, one verifies immediately that the projection of $H$ to $X_\xi$ is an axis for $\gamma$, from
which it follows that $\gamma$ acts as a hyperbolic isometry on $X_\xi$. Note moreover that
$\beta_\xi(\gamma)=0$ if and only if $\xi$ is the middle point of $\bd H$, where $\beta_\xi$ denotes the
Busemann homomorphism centered at $\xi$.

Now, if the $G$--action is smooth, the fact that the $G_{\xi_1, \dots, \xi_n}$ upon $X_{\xi_1, \dots, \xi_n}$ is
semisimple follows from a straightforward induction on $n$, since we know by Corollary~\ref{cor:parabolic} that
the $G$--action upon $X$ is semisimple.
\end{proof}

\section{The structure of amenable subgroups}

The main tool in proving Theorem~\ref{thm:StructureAmenable} is provided by the obstructions for continuous
isometric actions of amenable groups on Hadamard spaces established in \cite{AB98}. Let us recall its precise
statement:
\begin{prop}\label{prop:AdamsBallmann}
Let $H$ be an amenable locally compact group acting continuously by isometries on a proper $\CAT0$ space $X$.
Then one of the following holds:
\begin{itemize}
\item[(i)] $H$ stabilizes a Euclidean flat in $X$;

\item[(ii)] $H$ fixes a point in $X \cup \partial_\infty X$.
\end{itemize}
\end{prop}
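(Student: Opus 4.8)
The plan is to exploit amenability to manufacture an invariant measure on the compactification of $X$ and then to distill from it a canonical invariant object. Since $X$ is proper, its cone-topology compactification $\bar X = X \cup \partial_\infty X$ is compact and metrizable, and the $H$-action on it is continuous. As $H$ is amenable and acts continuously on this compact space, there exists an $H$-invariant Radon probability measure $\mu$ on $\bar X$. The entire argument then reduces to a structural analysis of $\mu$: one must show that $\mu$ forces either a fixed point in $\bar X$ (alternative (ii) when the point lies in $X$, alternative (i) when it lies in $\partial_\infty X$) or an invariant Euclidean flat.

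First I would dispose of the elementary regime. If $H$ admits a bounded orbit in $X$, then the circumcenter of the closure of that orbit is a canonical, hence $H$-fixed, point of $X$ (this is the Bruhat--Tits centre argument, as in \cite[Ch.~II, Cor.~2.8]{BH99}), giving alternative (ii) with a flat of dimension zero. So I may assume that every $H$-orbit in $X$ is unbounded. In this case I would replace $\mu$ by an $H$-invariant probability measure $\nu$ supported on the boundary: if $\mu(\partial_\infty X)>0$ one conditions $\mu$ on the $H$-invariant Borel set $\partial_\infty X$, while if $\mu$ is concentrated on $X$ one pushes its mass to infinity by a re-centering limit procedure that uses precisely the unboundedness of orbits. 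This reduces the problem to an $H$-invariant probability measure $\nu$ on $(\partial_\infty X, \angle_{\mathrm{Tits}})$, which is a \CAT1 space.

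The heart of the proof is a dichotomy governed by the Tits geometry of the support $\Sigma = \mathrm{supp}(\nu)$. If the Tits-circumradius of $\Sigma$ is at most $\pi/2$, then $\Sigma$ possesses a \emph{canonical} centre $\eta \in \partial_\infty X$, and since $\Sigma$ is $H$-invariant the uniqueness of $\eta$ forces $H.\eta = \eta$, yielding alternative (i). If instead the Tits-circumradius of $\Sigma$ exceeds $\pi/2$, the centre construction degenerates, and I would show that this failure is caused by $\Sigma$ containing antipodal pairs, i.e. points at Tits distance $\pi$. Each such pair is joined by a bi-infinite geodesic line in $X$; the parallel set of these lines is a closed convex $H$-invariant subset which, by the \CAT0 product-decomposition theorem, splits as $Y \times \R^k$. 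Locating the maximal Euclidean factor carried by the directions in $\Sigma$ then produces the invariant flat of alternative (ii).

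The main obstacle is this final flat-extraction step: one must assemble the antipodal directions into a single finite-dimensional Euclidean flat that is genuinely preserved by $H$, rather than merely a convex set containing lines. Controlling it requires the convexity of Busemann functions to organise the parallel set, properness to guarantee finite dimensionality and to keep the circumcentre constructions well defined, and the product-decomposition theorem to isolate the flat factor; one must also check that $H$ permutes the parallel lines coherently enough for the resulting $\R^k$ to be invariant. A secondary delicate point is the reduction producing the boundary measure $\nu$ when $\mu$ charges only the interior $X$: a probability measure on $X$ need not have finite moments and so need not admit a barycentre, and it is exactly the unboundedness of $H$-orbits that permits pushing its mass out to $\partial_\infty X$.
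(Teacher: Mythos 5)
Note first that the paper does not actually prove this proposition: it is quoted verbatim from Adams--Ballmann and the paper's ``proof'' is the citation \cite[Theorem]{AB98}. So your attempt has to be measured against the genuine argument of \cite{AB98}. Your opening moves do match the start of that argument: amenability gives an $H$-invariant probability measure $\mu$ on the compact metrizable space $X \cup \partial_\infty X$, and a bounded orbit yields a circumcenter fixed point as in \cite[Ch.~II, Cor.~2.8]{BH99}. But already your reduction to a boundary measure is off: no ``re-centering limit procedure'' is needed or available. If every $H$-orbit is unbounded, then $\mu$ cannot charge $X$ at all, since a compact $K \subset X$ with $\mu(K) > \mu(X)/2$ would meet each translate $g.K$, forcing a bounded orbit; and conversely, a weak-$*$ limit of pushed-out measures need not be $H$-invariant, so the procedure you describe would not produce the measure $\nu$ you want.

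The genuine gap is that both halves of your central dichotomy fail. First, a subset of a $\CAT1$ space of circumradius $\leq \pi/2$ need \emph{not} have a canonical circumcenter: two antipodal points in the Tits boundary of $\R^2$ (a round circle of length $2\pi$) have circumradius exactly $\pi/2$ and two distinct circumcenters, neither canonical. Uniqueness of the circumcenter holds only for radius strictly less than $\pi/2$, and the boundary case $=\pi/2$ is precisely the hard case of the whole theorem, not a degenerate one. Second, circumradius $> \pi/2$ does not imply the existence of antipodal pairs: for the $(3,3,3)$-triangle group acting on $X = \R^2$ (amenable, unbounded orbits), the uniform measure on three boundary points pairwise at Tits distance $2\pi/3$ is invariant; its support has circumradius $2\pi/3 > \pi/2$ and contains no pair at distance $\pi$, so your flat-extraction branch never gets started. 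Finally, even when antipodal pairs do exist, assembling the resulting lines into a single $H$-invariant flat is the real content of the theorem, which you name as ``the main obstacle'' but do not supply. The actual mechanism of \cite{AB98} is different and avoids circumcenters of the support altogether: one integrates Busemann functions to form $f(x) = \int_{\partial_\infty X} b_\xi(x)\, d\mu(\xi)$, a convex function that is $H$-invariant up to additive constants, and the operative dichotomy is whether $\mu$-almost every $\xi$ is a \emph{flat point}, i.e.\ $b_\xi$ is affine; affine Busemann functions split off a Euclidean factor that furnishes the invariant flat, while the non-flat case is pushed to a fixed point at infinity. (A cosmetic but systematic slip: you have the alternatives swapped --- a fixed point in $\partial_\infty X$ is alternative (ii), the invariant flat is alternative (i).)
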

\begin{proof}
See \cite[Theorem]{AB98}.
\end{proof}

Before proceeding to the proof of the main results, we still need a subsidiary lemma:

\begin{lem}\label{lem:flats}
Let $X$ be a proper $\CAT0$ space and $G $  be a totally disconnected group acting continuously, properly and
cocompactly on $X$. Let $(\xi_1, \dots, \xi_n)$ be a sequence such that $\xi_1 \in \bd X$, $\xi_{i+1} \in \bd
X_{\xi_1, \dots, \xi_i}$ for each $i=1, \dots, n-1$ and let $F$ be a flat in $X_{\xi_1, \dots, \xi_n}$ (possibly
$n=0$ and $F \subset X$). Suppose that $H < G$ is a closed amenable subgroup which fixes $(\xi_1, \dots, \xi_n)$
and which stabilizes $F$. Then $H$ possesses a finite index subgroup which fixes a point in $F \cup \bd F$.
\end{lem}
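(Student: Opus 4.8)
The plan is to transfer the problem to the flat $F$ itself, which is a finite–dimensional Euclidean space, and to exploit the fact that translations of a Euclidean space fix every point of its visual boundary. Since $X$ is proper and cocompact it is of bounded geometry by Lemma~\ref{lem:BoundedGeom}(ii), so iterating Proposition~\ref{prop:X_xi}(iii) shows that $Y := X_{\xi_1, \dots, \xi_n}$ is proper; hence the flat $F \subset Y$ is isometric to some $\R^d$ and $\Isom(F) \cong O(d) \ltimes \R^d$. Because $H$ fixes $(\xi_1, \dots, \xi_n)$, Proposition~\ref{prop:X_xi}(ii) applied inductively yields a continuous homomorphism $H \to \Isom(Y)$, and since $H$ stabilizes $F$ I obtain by restriction a homomorphism $\psi \colon H \to \Isom(F)$. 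The linear part $L \colon \Isom(F) \to O(d)$ records the action on $\bd F$: an element fixes the boundary point determined by a unit vector $w$ if and only if its linear part fixes $w$, and in particular every translation fixes all of $\bd F$. Thus fixing a point of $\bd F$ amounts to finding a common fixed direction for the linear parts, while fixing a point of $F$ amounts to a bounded orbit.

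First I would isolate the normal subgroup $N := H \cap \Ker \beta$, where $\beta = \beta_{\xi_1} \times \dots \times \beta_{\xi_n}$; the quotient $H/N$ embeds in $\R^n$ and is in particular torsion-free abelian. Running the construction in the proof of Proposition~\ref{prop:X_xi:smooth}(iii) — which uses only that the relevant elements lie in $\bigcap_i \Ker \beta_{\xi_i}$ and that $G$ is cocompact, not that a point of $Y$ is fixed — shows that $N$ is \TLF. Consequently $\psi(N)$ is a \TLF{} subgroup of $\Isom(F)$: every finitely generated subgroup of $\psi(N)$ has compact closure, and such a compact group of isometries fixes a point of the complete $\CAT0$ space $F$. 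A directed family of compact subgroups acting on a proper $\CAT0$ space fixes a point of $F \cup \bd F$, since the filtered family of their fixed-point sets either has nonempty intersection in $F$ or determines a canonical point of $\bd F$; I would record this as a short separate observation. Hence $\psi(N)$ fixes a point of $F \cup \bd F$, and since $N \trianglelefteq H$ the fixed set $\Fix(\psi(N)) \subset F \cup \bd F$ is $H$-invariant.

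The argument then splits according to the fixed subspace $W := \Fix\big(\overline{L(\psi(N))}\big)$ of the compact closure of the linear part of $\psi(N)$. If $W \neq 0$, then $\psi(N)$ fixes the direction $[w]$ for any $0 \neq w \in W$; combined with the fact that the elements of $H$ lying over $\beta(H) \subset \R^n$ have linear part fixing $w$, this produces a point of $\bd F$ fixed by all of $H$. If $W = 0$, the boundary alternative for the \TLF{} group $\psi(N)$ is excluded (a fixed boundary direction would lie in $W$), so $\psi(N)$ fixes a point of $F$ and $F' := \Fix_{F}(\psi(N))$ is a nonempty $H$-invariant flat; since $N$ acts trivially on $F'$, the residual action factors through the abelian group $H/N$, and I would argue that on $F'$ this group acts by translations, hence fixes every point of $\bd F'$ when $\dim F' \geq 1$ and fixes $F'$ itself otherwise. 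Either way $H$ fixes a point of $F \cup \bd F$.

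The step I expect to be the main obstacle is the claim, used in both branches, that elements of $H$ with nonzero Busemann character act on $F$ by translations, or at least fix the relevant boundary directions, rather than as genuine rotations. This is exactly where the proper action of $G$ on $X$ must be converted into rigidity on $F$: I would extract it from Lemma~\ref{lem:semisimple:inherited}, which guarantees that translation lengths are inherited through the projections $\pi_{\xi_i}$ on $\Ker \beta$, together with the discreteness at $0$ of the set of translation lengths from Corollary~\ref{cor:parabolic}. This rules out the non-discrete ``irrational rotation plus translation'' configurations that would otherwise obstruct the conclusion and that are \emph{not} excluded by amenability alone, since $\Isom(F)$ is itself amenable. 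A secondary point, where passing to a finite-index subgroup becomes genuinely necessary, is that the linear parts may have a nontrivial finite image permuting a pair of opposite directions, as in an infinite-dihedral action on a line; this is handled by replacing $H$ with the preimage of the identity component of $\overline{L(\psi(H))}$, which has finite index because $O(d)$ is a compact Lie group.
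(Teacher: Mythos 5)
There is a genuine gap at the foundation of your argument: the claim that $N = H \cap \Ker\beta$ is \TLF{}, which you justify by asserting that the construction in the proof of Proposition~\ref{prop:X_xi:smooth}(iii) uses only membership in $\bigcap_i \Ker\beta_{\xi_i}$ and cocompactness of $G$, ``not that a point of $Y$ is fixed.'' That assertion is false. In that proof, the very first step --- the existence of a sequence $(x_{n-1,m})_{m \geq 0}$ in $X_{\xi_1,\dots,\xi_{n-1}}$ with $\lim_{m} d(\varphi_{n-1}(g_i).x_{n-1,m},\, x_{n-1,m}) = 0$ --- holds precisely because the elements $g_i$ fix the point $x \in X_{\xi_1,\dots,\xi_n}$: a point of $(X_{\xi_1,\dots,\xi_{n-1}})_{\xi_n}$ is a class of strongly asymptotic rays, so an isometry fixing it and killed by $\beta_{\xi_n}$ has displacement tending to $0$ along such a ray. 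Without a fixed point in $Y$, an element of $\Ker\beta$ can perfectly well be hyperbolic. Concretely, take $X = \R^2$ and $G = \Z^2$ acting by integer translations (a discrete, hence totally disconnected, proper cocompact group), $n = 1$, and $\xi_1$ the boundary point in the direction of the first basis vector $e_1$. Then $H = G$ is closed, amenable, fixes $\xi_1$ and stabilizes the flat $F = X_{\xi_1} \cong \R$; but $N = H \cap \Ker\beta_{\xi_1}$ is the infinite cyclic group generated by the translation by $e_2$, which acts on $F$ as a nontrivial translation. So $N$ is an infinite discrete group, not \TLF{}, and $\psi(N)$ has no bounded orbit in $F$. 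Everything you build on the compactness of closures of finitely generated subgroups of $\psi(N)$ --- the fixed-point sets, the subspace $W$, the dichotomy $W \neq 0$ versus $W = 0$ --- collapses with it.

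The paper avoids this trap: it never claims $H \cap \Ker\beta$ is \TLF{}. Instead, it first shows via Montgomery--Zippin that $\varphi(H)$ is a \emph{discrete} subgroup of the Lie group $\Isom(F)$, then via the Tits alternative that $\varphi(H)$ is virtually metabelian, and only then invokes exactly the ingredients you reserve for your ``main obstacle'' --- Lemma~\ref{lem:semisimple:inherited} together with the discreteness at $0$ of translation lengths from Corollary~\ref{cor:parabolic} --- to conclude that the group of translations $T \cap \varphi(H \cap \Ker\beta)$ is a discrete subgroup of $T$. When this group is nontrivial, normality plus discreteness gives a finite-index subgroup centralizing a nontrivial translation, whence a fixed point in $\bd F$; when it is trivial, $\varphi(H)$ is abelian and an elementary case analysis (hyperbolic element, or all elliptic and induct on dimension) finishes the proof. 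Note also that your closing paragraph locates the difficulty in elements with \emph{nonzero} Busemann character, whereas the counterexample above already occurs for elements with \emph{zero} Busemann character; so the repair you sketch there does not patch the actual hole.
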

\begin{proof}
%
As in the proof of Proposition~\ref{prop:X_xi:smooth}(iii), we let $\beta_{\xi_i} : G_{\xi_1, \dots, \xi_i} \to
\R$ be the restriction of the Busemann homomorphism centered at $\xi_i$ and
$$\beta = \beta_{\xi_1} \times \dots \times \beta_{\xi_n} : G_{\xi_1, \dots, \xi_n} \to \R^n$$
be the direct product of these Busemann homomorphisms. Let $R = \Ker \beta$.

By hypothesis, we have $H < \Stab_{G_{\xi_1, \dots, \xi_n}}(F)$. Thus there is a well defined homomorphism
$$\varphi : H \to \Isom(F).$$
Since $H$ is totally disconnected, it follows from \cite[Ch.~V, Th.~2]{MontgomeryZippin55} that $\varphi(H)$
(endowed with the quotient topology) is a discrete group. Since moreover $\varphi(H)$ is amenable and contained
in the real Lie group $\Isom(F)$, it follows from \cite[Th.~1]{TitsAlternative} that $\varphi(H)$ is virtually
solvable, hence virtually metabelian because $\Isom(F)$ is abelian-by-compact. Up to replacing $H$ by a finite
index subgroup, we may -- and shall -- assume henceforth that $\varphi(H)$ is metabelian.

We let $T$ denote the translation subgroup of $\Isom(F)$. Thus we have $[\varphi(H), \varphi(H)] \subset T$. On
the other hand, since $R = \Ker \beta$ contains the derived group $[H, H]$, we deduce that $[\varphi(H),
\varphi(H)] \subset T \cap \varphi(H \cap R)$. Now we distinguish several cases.

Assume first that $T \cap \varphi(H \cap R)$ is nontrivial. By Corollary~\ref{cor:parabolic} and
Lemma~\ref{lem:semisimple:inherited}, the set of translation lengths of elements of $R$ upon $X_{\xi_1, \dots,
\xi_n}$ is discrete at $0$. Therefore, it follows that $T \cap \varphi(H \cap R)$ is a discrete subgroup of $T$.
Let now $t \in T \cap \varphi(H \cap R)$ be a nontrivial element. Since $T \cap \varphi(H \cap R)$ is normal in
$\varphi(H)$ and since conjugate elements act with the same translation length, it follows from the discreteness
of $T \cap \varphi(H \cap R)$ in $T$ that $\varphi(H)$ possesses a finite index subgroup which centralizes $t$.
Since $t$ acts as a hyperbolic element, we deduce that its unique attractive fixed point in the sphere at
infinity $\bd F$ is fixed by a finite index subgroup of $H$. Hence we are done in this case.

We assume henceforth that $T \cap \varphi(H \cap R)$ is trivial. By the above, it follows that $\varphi(H)$ is
abelian. Suppose now $\varphi(H)$ contains an element $t'$ which acts as a hyperbolic element on $F$. Then
$\varphi(H)$ fixes the attractive fixed point of $t'$ in $\bd F$ and again we are done. Suppose finally that
every element in $\varphi(H)$ is elliptic. Since the fixed point set of an element in $\Isom(F)$ is a linear,
hence Euclidean, subspace, a straightforward induction on dimension  shows then that $\varphi(H)$ has a global
fixed point in $F$. This concludes the proof.
\end{proof}

We are now ready for the:
\begin{proof}[\textbf{Proof of Theorems~\ref{thm:StructureAmenable} and~\ref{thm:RefinedBdry}}]
Note that $X$ is complete and of bounded geometry, since $\Isom(X)$ is cocompact by hypothesis.

The fact that $G_x$ is (\TLF)-by-(virtually abelian) for each $x \in X \cup \bdfine X$ follows from
Proposition~\ref{prop:X_xi:smooth}(iii). Any such subgroup is amenable in view of
Corollary~\ref{cor:TLF=>amenable}.

Let now $H < G$ be a closed amenable subgroup. We want to show that $H$ possesses a finite index subgroup which
fixes an element of $X \cup \bdfine X$.

Assume that $H$ fixes no point in $X \cup \bd X$. In view of Proposition~\ref{prop:AdamsBallmann}, this implies
that $H$ stabilizes a flat $F \subset X$. By Lemma~\ref{lem:flats}, we deduce that $H$ possesses a finite index
subgroup which fixes a point in $\bd F$. This shows that in all cases $H$ possesses a finite index subgroup
$H_0$ which fixes a point $\xi_1 \in X \cup \bd X$.

If $\xi_1 \in X$ we are done. Otherwise $H_0$ acts on $X_{\xi_1}$. Assume that $H_0$ fixes no point in
$X_{\xi_1} \cup \bd X_{\xi_1}$. Then $H_0$ stabilizes a flat in $X_{\xi_1}$ and, by Lemma~\ref{lem:flats}, we
deduce that $H_0$ possesses a finite index subgroup $H_1$ which fixes a point $\xi_2$ in $X_{\xi_1} \cup \bd
X_{\xi_1}$. Again, if $\xi_2 \in X_{\xi_1}$ we are done. Otherwise $H_2$ acts on $X_{\xi_1, \xi_2}$.

Now we repeating this argument inductively. The process will stop after finitely many steps in view of
Corollary~\ref{cor:Bound_GeomDim}. Therefore, we obtain a point $(\xi_1, \dots, \xi_n, x) \in \bdfine X$ and a
finite index subgroup $H_n < H$ which is contained in $G_{\xi_1, \dots, \xi_n, x}$. By
Proposition~\ref{prop:X_xi:smooth}(iii), the latter subgroup is (\TLF{})-by-(abelian torsion free) and its
LF-radical is open.
\end{proof}

\begin{proof}[\textbf{Proof of Corollary~\ref{cor:AmenableSimple}}]
Let $\Gamma < G$ be a finitely generated simple subgroup which is contained in an amenable subgroup of $G$. In
view of  the characterization of amenability in terms of a fixed point property \cite[Th.~G.1.7]{BeHaVa07}, we
may and shall assume that $\Gamma$ is in fact contained in a closed amenable subgroup of $G$, say $H$. Let $H_0$
be its LF-radical. There are two cases.

Suppose first that $H_0 \cap \Gamma$ is trivial. Then $\Gamma$ injects in the quotient $H/H_0$, which is
virtually abelian. Since $\Gamma$ is simple and finitely generated, it must then be finite.

Suppose now that $H_0 \cap \Gamma$ is nontrivial. Then $\Gamma \subset H_0$. Therefore $\Gamma$ is contained in
a compact subgroup of $G$. Since any such subgroup is a profinite group, it follows that $\Gamma $ is residually
finite. Hence, since $\Gamma$ is simple, it must be finite.
\end{proof}

\begin{proof}[\textbf{Proof of Corollary~\ref{cor:SolvSubgp}}]
Let $H < \Gamma$ be an amenable subgroup. Let $F$ be its LF-radical. It is a discrete countable locally finite
group. In particular, it is a union of an ascending chain of finite subgroups of $\Gamma$. Since $\Gamma$ acts
geometrically on $X$, it follows from \cite[Ch.~II, Cor.~2.8]{BH99} that it has finitely many conjugacy classes
of finite subgroups. In particular $F$ is finite. Therefore, there exists a finite index subgroup $H_0 < H$
which centralizes $F$. By Theorem~\ref{thm:StructureAmenable}, the group $H/F$ is virtually abelian. Thus $H_0$
possesses a finite index subgroup $H_1$ such that the derived subgroup $[H_1, H_1]$ is contained in $F$. Since
any finitely generated group with a finite derived subgroup is virtually abelian \cite[Ch.~II, Lem.~7.9]{BH99},
it follows that any finitely generated subgroup of $H_1$ is virtually abelian. On the other hand, the group
$\Gamma$ satisfies an ascending chain condition for virtually abelian subgroups by \cite[Ch.~II, Th.~7.5]{BH99},
from which it finally follows that $H_1$ is virtually abelian and finitely generated and, hence, so is $H$.
\end{proof}

%

{\small \providecommand{\bysame}{\leavevmode\hbox to3em{\hrulefill}\thinspace}
\providecommand{\MR}{\relax\ifhmode\unskip\space\fi MR }
\providecommand{\MRhref}[2]{%
  \href{http://www.ams.org/mathscinet-getitem?mr=#1}{#2}
} \providecommand{\href}[2]{#2}

}

\noindent \textsc{D\'epartement de Math\'ematiques,
Universit\'e Libre de Bruxelles (U.L.B.), CP 216, Boulevard du Triomphe, 1050 Bruxelles, Belgique.}\\

\vspace{.25cm} \noindent
\emph{Current address:}\\
\textsc{Mathematical Institute, University of Oxford, 24--29 St Giles', Oxford OX1 3LB, United Kingdom.}\\
{\tt caprace@maths.ox.ac.uk}


\begin{thebibliography}{BdlHV07}

\bibitem[AB98]{AB98}
Scot Adams and Werner Ballmann, \emph{Amenable isometry groups of {H}adamard
  spaces}, Math. Ann. \textbf{312} (1998), no.~1, 183--195. \MR{MR1645958
  (99i:53032)}

\bibitem[BdlHV07]{BeHaVa07}
B.~Bekka, P.~de~la Harpe, and A.~Valette, \emph{{Kazhdan's property (T)}}, Book
  in preparation, {\ttfamily
  http://perso.univ-rennes1.fr/bachir.bekka/KazhdanTotal.pdf}, 2007.

\bibitem[BH99]{BH99}
Martin~R. Bridson and Andr{\'e} Haefliger, \emph{Metric spaces of non-positive
  curvature}, Grundlehren der Mathematischen Wissenschaften [Fundamental
  Principles of Mathematical Sciences], vol. 319, Springer-Verlag, Berlin,
  1999. \MR{MR1744486 (2000k:53038)}

\bibitem[BM96]{BM96}
M.~Burger and Sh. Mozes, \emph{{{\rm CAT(-1)}}-spaces, divergence groups and
  their commensurators}, J. Amer. Math. Soc. \textbf{9} (1996), 57--93.

\bibitem[Bri99]{Br99}
M.~Bridson, \emph{On the semisimplicity of polyhedral isometries}, Proc. Amer.
  Math. Soc. \textbf{127} (1999), no.~7, 2143--2146.

\bibitem[BT72]{BT72}
F.~Bruhat and J.~Tits, \emph{Groupes r{\'e}ductifs sur un corps local. {I}:
  Donn{\'e}es radicielles valu{\'e}es}, Publ. Math. I.H.E.S. {\bf 41} (1972),
  5--251.

\bibitem[BW06]{BaumgartnerWillis06}
U.~Baumgartner and G.~Willis, \emph{The direction of an automorphism of a
  totally disconnected locally compact group}, Math. Z. \textbf{252} (2006),
  no.~2, 393--428. \MR{MR2207803 (2007a:22005)}

\bibitem[Dav98]{Da98}
M.~Davis, \emph{Buildings are {${\rm CAT}(0)$}}, Geometry and cohomology in
  group theory (Durham, 1994) (Cambridge) (H.~Kropholler et~al., ed.), London
  Math. Soc. Lecture Note Ser., vol. 252, Cambridge Univ. Press, 1998,
  pp.~108--123.

\bibitem[Kar65]{Karpelivic}
F.I. Karpelevich, \emph{{The geometry of geodesics and the eigenfunctions of
  the Beltrami-Laplace operator on symmetric spaces.}}, Trans. Mosc. Math. Soc.
  \textbf{14} (1965), 51--199 (English. Russian original).

\bibitem[Kle99]{Kl99}
Bruce Kleiner, \emph{The local structure of length spaces with curvature
  bounded above}, Math. Z. \textbf{231} (1999), no.~3, 409--456. \MR{MR1704987
  (2000m:53053)}

\bibitem[Lee97]{Leeb}
B.~Leeb, \emph{{A characterziation of irreducible symmetric spaces and
  Euclidean buildings of higher rank by their asympotic geometry}},
  Habilitationsschrift, Universit{\"a}t Bonn, 1997.

\bibitem[MZ55]{MontgomeryZippin55}
Deane Montgomery and Leo Zippin, \emph{Topological transformation groups},
  Interscience Publishers, New York-London, 1955. \MR{MR0073104 (17,383b)}

\bibitem[Pat88]{Paterson_amenability}
Alan L.~T. Paterson, \emph{Amenability}, Mathematical Surveys and Monographs,
  vol.~29, American Mathematical Society, Providence, RI, 1988. \MR{MR961261
  (90e:43001)}

\bibitem[Pla65]{Platonov65}
V.P. Platonov, \emph{{Lokal projective nilpotent radicals in topological
  groups.}}, Dokl. Akad. Nauk BSSR \textbf{9} (1965), 573--577 (Russian).

\bibitem[Thu97]{Thurston97}
William~P. Thurston, \emph{Three-dimensional geometry and topology. {V}ol. 1},
  Princeton Mathematical Series, vol.~35, Princeton University Press,
  Princeton, NJ, 1997, Edited by Silvio Levy. \MR{MR1435975 (97m:57016)}

\bibitem[Tit72]{TitsAlternative}
J.~Tits, \emph{Free subgroups in linear groups}, J. Algebra \textbf{20} (1972),
  250--270. \MR{MR0286898 (44 \#4105)}

\end{thebibliography}
\end{document}